\pgfplotsset{compat=1.16}
\numberwithin{equation}{section}
\newcounter{corr}
\definecolor{violet}{rgb}{0.580,0.,0.827}
\newcommand{\corr}[4][]{\typeout{Warning : a correction remains in page \thepage}
 \stepcounter{corr}
	      {\color{blue}\ifmmode\text{\,\sout{\ensuremath{#2}}\,}\else\sout{#2}\fi}
        {\color{green!50!black}#3}
        {\color{violet}#4}
}
\newcommand{\email}[1]{\href{mailto:#1}{#1}}
\newtheorem{theorem}{Theorem}
\newtheorem{proposition}[theorem]{Proposition}
\newtheorem{lemma}[theorem]{Lemma}
\newtheorem{corollary}[theorem]{Corollary}
\newtheorem{assumption}{Assumption}
\newtheorem{remark}{Remark}
\newcommand{\bmrm}[1]{{\bm{{\rm #1}}}}
\newcommand{\mat}[1]{\bmrm{#1}}
\newcommand{\bmn}{{\bm{n}}}
\newcommand{\bmx}{{\bm{x}}}
\newcommand{\calE}{\mathcal{E}}
\newcommand{\calF}{\mathcal{F}}
\newcommand{\calH}{\mathcal{H}}
\newcommand{\calM}{\mathcal{M}}
\newcommand{\calT}{\mathcal{T}}
\newcommand{\bbN}{\mathbb{N}}
\newcommand{\bbP}{\mathbb{P}}
\newcommand{\bbR}{\mathbb{R}}
\newcommand{\rma}{{\rm{a}}}
\newcommand{\rmb}{{\rm{b}}}
\newcommand{\rmp}{{\rm{p}}}
\newcommand{\rms}{{\rm{s}}}
\newcommand{\eq}{ ={}& }
\newcommand{\lea}{ \le{}& }
\newcommand{\les}{ \lesssim{}& }
\newcommand{\nn}{\nonumber}
\newcommand{\nl}{\nn\\}
\newcommand{\defeq}{\vcentcolon=}
\newcommand{\ul}[1]{\underline{#1}}
\newcommand{\ol}[1]{\overline{#1}}
\newcommand{\CARD}[1]{\mbox{\textrm{Card}}\big(#1\big)}
\newcommand{\bdry}{\partial}
\newcommand{\Mh}[1][h]{\calM_{#1}}
\newcommand{\Th}[1][h]{\calT_{#1}}
\newcommand{\Fh}[1][h]{\calF_{#1}}
\newcommand{\Fhb}{\Fh^{\rmb}}
\newcommand{\T}{{T}}
\newcommand{\F}{{F}}
\newcommand{\FT}{{\Fh[\T]}}
\newcommand{\hT}{h_\T}
\newcommand{\hF}{h_\F} 
\newcommand{\hX}{h_X}
\newcommand{\bdryT}{{\bdry \T}}
\newcommand{\matK}{\mat{K}}
\newcommand{\matKT}{\matK_\T}
\newcommand{\olK}{\ol{K}}
\newcommand{\ulK}{\ul{K}}
\newcommand{\olKT}{\olK_\T}
\newcommand{\ulKT}{\ulK_\T}
\newcommand{\alphaT}{\alpha_\T}
\newcommand{\nor}{\bmn}
\newcommand{\norT}{\nor_{\bdryT}}
\newcommand{\norTF}{\nor_{\T\F}}
\def\R{\bbR}
\def\N{\bbN}
\newcommand{\POLY}[1]{\bbP^{#1}}
\newcommand{\HS}[1]{H^{#1}}
\newcommand{\HONE}{\HS{1}}
\newcommand{\HONEzr}{\HONE_0}
\newcommand{\WSP}[1]{W^{#1}}
\newcommand{\LP}[1]{L^{#1}}
\newcommand{\LTWO}{\LP{2}}
\newcommand{\norm}[2][]{\|#2\|_{#1}}
\newcommand{\seminorm}[2][]{|#2|_{#1}}
\newcommand{\brac}[2][]{(#2)_{#1}}
\newcommand{\jump}[2][]{[\![#2]\!]_{#1}}
\newcommand{\SEMINORM}[2][]{\left|#2\right|_{#1}}
\newcommand{\BRAC}[2][]{\left(#2\right)_{#1}}
\newcommand{\piT}[1]{\pi_T^{#1}}
\newcommand{\piTzr}[1]{\piT{0, #1}}
\newcommand{\pih}[1]{\pi_h^{#1}}
\newcommand{\pihzr}[1]{\pih{0, #1}}
\newcommand{\piFT}[1]{\pi_\FT^{#1}}
\newcommand{\piFTzr}[1]{\piFT{0, #1}}
\newcommand{\piF}[1]{\pi_F^{#1}}
\newcommand{\piFzr}[1]{\piF{0, #1}}
\newcommand{\piX}[1]{\pi_X^{#1}}
\newcommand{\piKT}[1]{\pi_{\matK, T}^{#1}}
\newcommand{\piKTe}[1]{\piKT{1, #1}}
\newcommand{\piKh}[1]{\pi_{\matK, h}^{#1}}
\newcommand{\piKhe}[1]{\piKh{1, #1}}
\newcommand{\pKT}[1]{\rmp_{\matK,T}^{#1}}
\newcommand{\pKh}[1]{\rmp_{\matK,h}^{#1}}
\newcommand{\U}[2]{\ul{U}_{#1}^{#2}}
\newcommand{\UT}[1]{\U{T}{#1}}
\newcommand{\UTkl}{\UT{k,l}}
\newcommand{\Uh}[1]{\U{h}{#1}}
\newcommand{\Uhkl}{\Uh{k,l}}
\newcommand{\Uhklzr}{\U{h, 0}{k,l}}
\newcommand{\I}[2]{\ul{I}_{#1}^{#2}}
\newcommand{\IT}[1]{\I{T}{#1}}
\newcommand{\ITkl}{\IT{k,l}}
\newcommand{\Ih}[1]{\I{h}{#1}}
\newcommand{\Ihkl}{\Ih{k,l}}
\def\a{\rma}
\newcommand{\aKT}{\a_{\matK,T}}
\newcommand{\sKT}{\rms_{\matK,T}}
\newcommand{\aKh}{\a_{\matK,h}}
\newcommand{\sKh}{\rms_{\matK,h}}
\newcommand{\sKTgrad}{\sKT^{\nabla}}
\newcommand{\sKTbdry}{\sKT^{\partial}}
\newcommand{\sKTgradmin}{\sKT^{\nabla, {\rm min}}}
\newcommand{\sKTgradmax}{\sKT^{\nabla, {\rm max}}}
\newcommand{\sKTkminus}{\sKT^{(k-1)}}
\newcommand{\vT}{v_T}
\newcommand{\vh}{v_h}
\newcommand{\vF}{v_F}
\newcommand{\vFT}{v_{\FT}}
\newcommand{\uT}{u_T}
\newcommand{\uF}{u_F}
\newcommand{\ulvT}{\ul{v}_T}
\newcommand{\uluT}{\ul{u}_T}
\newcommand{\ulvh}{\ul{v}_h}
\newcommand{\uluh}{\ul{u}_h}
\newcommand{\deltaKT}[1]{\delta_{\matK,T}^{#1}}
\newcommand{\deltaKFT}[1]{\delta_{\matK,\FT}^{#1}}
\newcommand{\locerrorRHS}[1]{\alphaT\Big[\hT^{r+1}\seminorm[\matK, \HS{r+2}(T)]{#1}\Big]^2}
\newcommand{\errorRHS}[1]{\BRAC{\sum_{T\in\Th}\locerrorRHS{#1}}^\frac12}
\newcommand{\logLogSlopeTriangle}[5]
{
	\pgfplotsextra
	{
		\pgfkeysgetvalue{/pgfplots/xmin}{\xmin}
		\pgfkeysgetvalue{/pgfplots/xmax}{\xmax}
		\pgfkeysgetvalue{/pgfplots/ymin}{\ymin}
		\pgfkeysgetvalue{/pgfplots/ymax}{\ymax}
		
		\pgfmathsetmacro{\xArel}{#1}
		\pgfmathsetmacro{\yArel}{#3}
		\pgfmathsetmacro{\xBrel}{#1-#2}
		\pgfmathsetmacro{\yBrel}{\yArel}
		\pgfmathsetmacro{\xCrel}{\xArel}
		
		\pgfmathsetmacro{\lnxB}{\xmin*(1-(#1-#2))+\xmax*(#1-#2)} 
		\pgfmathsetmacro{\lnxA}{\xmin*(1-#1)+\xmax*#1} 
		\pgfmathsetmacro{\lnyA}{\ymin*(1-#3)+\ymax*#3} 
		\pgfmathsetmacro{\lnyC}{\lnyA+#4*(\lnxA-\lnxB)}
		\pgfmathsetmacro{\yCrel}{\lnyC-\ymin)/(\ymax-\ymin)}
		
		\coordinate (A) at (rel axis cs:\xArel,\yArel);
		\coordinate (B) at (rel axis cs:\xBrel,\yBrel);
		\coordinate (C) at (rel axis cs:\xCrel,\yCrel);
		
		\draw[#5]   (A)-- node[pos=0.5,anchor=north] {\scriptsize{1}}
		(B)-- 
		(C)-- node[pos=0.,anchor=west] {\scriptsize{#4}} 
		cycle;
	}
}
\begin{document}	
\title{Robust Hybrid High-Order method on polytopal meshes with small faces}
\author[1]{J\'er\^ome Droniou}
\author[2]{Liam Yemm}
\affil[1]{School of Mathematics, Monash University, Melbourne, Australia, \email{jerome.droniou@monash.edu}}
\affil[2]{School of Mathematics, Monash University, Melbourne, Australia, \email{liam.yemm@monash.edu}}

\maketitle

\begin{abstract}
  We design a Hybrid High-Order (HHO) scheme for the Poisson problem that is fully robust on polytopal meshes in the presence of small edges/faces. We state general assumptions on the stabilisation terms involved in the scheme, under which optimal error estimates (in discrete and continuous energy norms, as well as \(\LTWO\)-norm) are established with multiplicative constants that do not depend on the maximum number of faces in each element, or the relative size between an element and its faces. We illustrate the error estimates through numerical simulations in 2D and 3D on meshes designed by agglomeration techniques (such meshes naturally have elements with a very large numbers of faces, and very small faces).
  \medskip\\
  \textbf{Key words:} Hybrid High-Order scheme, error analysis, small faces, agglomerated meshes. 
  \medskip\\
  \textbf{MSC2010:} 65N12, 65N15.
\end{abstract}

	
\section{Introduction}\label{sec:introduction}

In this paper, we design a Hybrid High-Order (HHO) scheme for diffusion problems that is fully robust on polytopal meshes in the presence of small edges/faces. 

Hybrid-High Order schemes form a family of polytopal methods, that is, numerical methods for diffusion problems that can be applied to meshes made of generic polygonal (in 2D) or polyhedral (in 3D) elements.	Additionally, HHO methods are of arbitrary order, which is to say they can achieve any preset level of accuracy by a proper selection of the degrees of their polynomial unknowns. In recent years, there has been a growing interest in polytopal methods, both of low- and arbitrary-order. A non-exhaustive list includes Discontinuous Galerkin and Hybridizable Discontinuous Galerkin methods \cite{di-pietro.ern:2011:mathematical,cangiani.dong.ea:2017:discontinuous,cockburn.dong.ea:2009:hybridizable}, Multi-Point Flux Approximation Finite Volume methods \cite{aavatsmark.eigestad.ea:2008:compact}, Hybrid Mimetic Mixed methods \cite{droniou.eymard:2010:unified} (which include Mixed/Hybrid Mimetic Finite Differences \cite{beirao-da-veiga.lipnikov.ea:2014:mimetic}, the SUSHI scheme \cite{eymard.gallouet.ea:2010:discretization} and Mixed Finite Volumes \cite{droniou.eymard:2006:mixed}), Virtual Element method \cite{beirao-da-veiga.brezzi.ea:2013:basic,ahmad.alsaedi.ea:2013:equivalent,brezzi.falk.ea:2014:basic,cangiani.manzini.ea:2017:conforming}, Weak Galerkin methods \cite{mu.wang.ea:2015:weak}, and polytopal Finite Elements \cite{sukumar.tabarraei:2004:conforming}. We refer the reader to the introduction of \cite{di-pietro.droniou:2020:hybrid} for a thorough literature review of polytopal methods. Among those, HHO has specific features that set them apart: they are built on polynomial reconstructions that account for the local physics and enable robustness with respect to the model's parameters (such as dominating advection in diffusion--advection models \cite{di-pietro.droniou.ea:2014:discontinuous}); their design is dimension-independent; and they are amenable to local static condensation (which drastically reduces the number of globally coupled degrees of freedom). Moreover, they can be recast in finite volume form, using numerical fluxes that satisfy the foundational properties of finite volume methods: conservativity and local balance \cite{droniou:2014:finite}; such fluxes prove particularly useful for coupled flow problems \cite{anderson.droniou:2018:arbitrary,botti.di-pietro.ea:2019.hybrid}. The original framework of the HHO method can be traced back to \cite{di-pietro.ern.ea:2014:arbitrary} which formulates a hybrid method of arbitrary order for the Poisson problem compatible with general polytopal meshes. These principles were extended to a linear elasticity problem in \cite{di-pietro.ern:2015:hybrid} and referred to as a `Hybrid High-Order method'. A comprehensive overview of the method and its applications can be found in \cite{di-pietro.droniou:2020:hybrid}.

Analysis of polytopal methods is usually carried out under mesh regularity assumptions that require, as the mesh is refined, that the elements and faces do not stretch in one direction, and that the faces have a comparable size to their elements. Discontinuous Galerkin (DG) schemes have already been shown to be robust with respect to small/numerous faces (c.f. \cite[Section 4.3]{cangiani.dong.ea:2017:discontinuous}). More recently, DG schemes have been applied on near arbitrary meshes possessing, possibly, curved elements \cite{cangiani.dong:2019:version}. Some methods, such as Virtual Element methods (VEM), require specific design choices of the stabilisation terms to ensure that the error estimates are independent of the presence of small faces \cite{brenner.sung:2018:virtual,brenner.guan.ea:2017:some,beirao-da-veiga.lovadini:2017:stability}. We also note the recent work \cite{beirao-da-veiga.vacca:2020:sharper} in which an error estimate for VEM is established, that separate the different contributions of element and edge unknowns and shed some light on the role of the different polynomial degrees of these unknowns. None of these methods are however finite volume methods, for which faces play a particular role. In the context of HHO, an analysis on skewed meshes has been carried out in \cite{droniou:2020:interplay}, and identifies how the error estimate is impacted by the element distortion and local diffusion tensor. This analysis however does not lead to robust estimates in terms of small faces in distorted meshes, or meshes with small faces but otherwise regular (``round'') elements. We also note the recent work \cite{bertoluzza.manzini.ea:2021}, made public shortly after our work, on the design of stabilisations for non-conforming VEM (linked to HHO \cite[Section 5.5]{di-pietro.droniou:2020:hybrid}) that allow for error estimates robust with respect to small edges; these stabilisations are based on the fully discrete representation of ncVEM (closer to the HHO presentation than to standard VEM presentations), but are restricted to the 2D setting and require a more complex abstract construction.

Meshes with complex polytopal elements are often unavoidable in applications. Solution techniques (e.g. multi-grid algorithms) for the linear systems arising from the scheme may require to consider meshes comprising of agglomerated elements, with many faces that are much smaller than the elements themselves. Such grids naturally arise when meshing domains with complex geometries, such as in subsurface fluid flow \cite{yeh.yeh:2000:computational}. The existence of numerous thin layers within the subsoil, as well as faults, wells, and other complex geometry are best captured by first meshing these features with very small elements, that are then agglomerated together to create larger elements and obtain a final mesh of reasonable size.

A key aspect to the analysis in this paper is combining the unknowns on each face into a single boundary term, in a similar way as in \cite{brenner.sung:2018:virtual} for the VEM. This combination is only for the purpose of analysis (the boundary unknowns remain discontinuous polynomials, as standard in HHO), but it allows for the use of boundary trace inequalities that hold independently of the individual faces. Contrary to \cite{brenner.sung:2018:virtual}, however, we do not require the elements to be star-shaped. The analysis is presented on an anisotropic diffusion problem but, thanks to the generic results we establish, can easily be extended to other models and in particular problems involving non-linear operators such as the $p$-Laplacian as in \cite[Section 6]{di-pietro.droniou:2020:hybrid}. The HHO scheme is given in Section \ref{sec:discrete.problem} with assumptions on the stabilisation term less restrictive than standard (see Remark \ref{rem:stab.assumption}). This change in design condition removes in particular a continuity assumption on the HHO local bilinear form. Previous approaches for the HHO analysis used this continuity together with a stability property of the local interpolator to establish consistency estimates \cite{di-pietro.droniou:2020:hybrid}; this approach however led to estimates that are not robust with respect to the number or relative sizes of faces in each element. Our analysis therefore differs from the typical approach due to weaker assumptions on the stabilsation form. In Section \ref{sec:error.estimates} we provide stability-independent error bounds in weighted seminorms. These error estimates scale linearly with the anisotropy of the diffusion tensor and have a reduced dependency on the diffusion tensor compared to previous estimates for HHO methods, such as in \cite[Theorem 3.18]{di-pietro.droniou:2020:hybrid}. In Section \ref{sec:analysis} we state some preliminary lemmas, and prove the results given in the previous section. Section \ref{sec:examples.sT} provides a number of potential stabilisation terms satisfying the required assumptions. In particular, it is established that -- upon a change of scaling -- the original HHO stabilisation satisfies our modified set of design assumptions. Finally, we conclude the paper with numerical results in 2 and 3 dimensions.

\subsection{Model and Assumptions on the Mesh}

We take a polytopal domain \(\Omega\subset \R^d\), \(d\ge 2\), and consider the Dirichlet problem: Find \(u\) such that 
\begin{align}
	-\nabla\cdot(\matK\nabla u) \eq f \quad\text{in}\quad\Omega\nl
	u \eq 0 \quad\text{on}\quad\partial\Omega, \label{eq:strong.form}
\end{align}
for some source term \(f\in \LTWO(\Omega)\) and diffusion tensor \(\matK\) assumed to be a symmetric, piecewise constant matrix-valued function satisfying, for all \(\bmx\in\R^d\),
\begin{equation}\label{eq:uniform.elliptic}
	\ulK\bmx\cdot\bmx \le (\matK\bmx)\cdot\bmx \le \olK\bmx\cdot\bmx 
\end{equation}
for two fixed real numbers \(0 < \ulK \le \olK\).
The variational problem reads: find \(u\in \HONEzr( \Omega) \) such that
\begin{equation}\label{eq:weak.form}
	\a(u, v)  = \ell(v), \qquad\forall v\in \HONEzr(\Omega), 
\end{equation}
where \(\a(u, v) \defeq \brac[\Omega]{\matK\nabla u, \nabla v}\) and \(\ell(v) \defeq \brac[\Omega]{f, v}\). Here and in the following, \(\brac[X]{\cdot, \cdot}\) is the \(\LTWO\)-inner product of scalar- or vector-valued functions on a set \(X\) for its natural measure. Note that we also use \(\brac[X]{v, w}\) to denote the integral of the product \(vw\) whenever this product is integrable over \(X\) (which does not necessarily requires \(v,w\in \LTWO(X)\); \(\brac[X]{v, w}\) also makes sense if \(v\in L^1(X)\) and \(w\in L^\infty(X)\) for example).

Let \(\calH\subset(0, \infty)\) be a countable set of mesh sizes with a unique cluster point at \(0\). For each \(h\in\calH\), we partition the domain \(\Omega\) into a mesh \(\Mh=(\Th, \Fh)\), for which a detailed definition can be found in \cite[Definition 1.4]{di-pietro.droniou:2020:hybrid}. The set of mesh elements \(\Th\) is a disjoint set of polytopes such that \(\ol{\Omega}=\bigcup_{T\in\Th}\ol{T}\). The set \(\Fh\) is a collection of mesh faces forming a partition of the mesh skeleton, i.e. \(\bigcup_{T\in\Th}\bdryT=\bigcup_{F\in\Fh}\ol{F}\). The boundary faces \(F\subset\partial \Omega\) are gathered in the set \(\Fhb\). The parameter \(h\) is given by \(h\defeq\max_{T\in\Th}\hT\) where, for \(X=T\in\Th\) or \(X=F\in\Fh\), \(\hX\) denotes the diameter of \(X\). We shall also collect the set of faces attached to an element \(T\in\Th\) in the set \(\FT:=\{F\in\Fh:F\subset T\}\). The (constant) unit normal to \(F\in\FT\) pointing outside \(T\) is denoted by \(\norTF\), and \(\norT:\bdryT\to\R^d\) is the piecewise constant outer unit normal defined by \((\norT)|_F=\norTF\) for all \(F\in\FT\).

The regularity assumption in \cite[Definition 1.9]{di-pietro.droniou:2020:hybrid} on sequences of meshes \((\Mh)_{h\in\calH}\) forces each face to have a comparable (uniformly in \(h\)) size to the cells it belongs to, and imposes also a uniform upper bound on \(\CARD{\FT}\); this prevents for example from considering meshes obtained by coarsening fine meshes. The following assumption, made in the rest of this paper, is much less restrictive -- for example and contrary to \cite[Definition 1.9]{di-pietro.droniou:2020:hybrid}, it covers the mesh family represented in Figure \ref{fig:meshA}.

\begin{assumption}[Regular mesh sequence]\label{assum:star.shaped}	
	There exists a constant \(\varrho>0\) such that, for each \(h\in\calH\), each \(T\in\Th\) and each \(F\in\Fh\) is connected by star-shaped sets with parameter \(\varrho\) (see \cite[Definition 1.41]{di-pietro.droniou:2020:hybrid}). 
\end{assumption}	

\begin{remark}[Connected by star-shaped set] 
	The assumption of connectedness by star-shaped sets with parameter \(\varrho\)  means that each cell/face \(X\) can be written as the union of less that \(\varrho^{-1}\) sets \(X_i\), each one being star-shaped with respect to all points in a ball of radius \(\varrho h_{X_i}\), and that between any two sets \(X_i, X_j\) we can find a path of sets whose pairwise intersection contains balls of radius \(\varrho \hX\). 
\end{remark}

\begin{remark}[Assumption on the faces] 
	The requirement that each face is connected by star-shaped sets is only required in Lemmas \ref{lem:piFT.bound} and \ref{lem:L2proj.bdry} where we state properties of projectors on faces in the \(\LP{p}\)-norm. These properties rely on inverse Lebesgue inequalities, which in turn require the condition on the faces. When considering linear models, the properties are only required for \(p=2\) (which do not require inverse Lebesgue inequalities) and thus the assumption that the faces are connected by star-shaped sets can be dropped.
\end{remark}

We further require that the elements of the mesh align with the discontinuities of the diffusion tensor, i.e., for each \(T\in\Th\), \(\matK|_T\defeq \matKT\) is a constant matrix. In an analogous manner to \eqref{eq:uniform.elliptic} we define quantities \(0 < \ulKT \le \olKT\) to satisfy
\[
	\ulKT\bmx\cdot\bmx \le (\matKT\bmx )\cdot\bmx \le \olKT\bmx\cdot\bmx \qquad\forall\bmx\in\R^d.
\]
The diffusion anisotropy ratio \(\alphaT \defeq \frac{\olKT}{\ulKT}\) also comes of use.


\section{The Discrete Problem}\label{sec:discrete.problem}
Hybrid High-Order methods hinge on the local approximation of the variational problem on each element \(T\in\Th\). This is achieved here similar to the procedure found in \cite{di-pietro.droniou:2020:hybrid}, however, the analysis is performed differently to cope with the presence of many small faces. We begin by recalling the definition of local polynomials spaces and associated projectors. We then move on to presenting the local space of functions on each element, given by a couple \(( u,v) \) where \(u\) is a polynomial on the element and \(v\) is a piecewise discontinuous polynomial function on the boundary. Finally, we present the HHO method and state the error estimates in various norms, with constants independent of the number or relative size of the faces in each element.

In the following, we fix \(k,l\in\N\) such that \(|k-l| \le 1\); these correspond to the polynomial degree of the face and element unknowns of the HHO method. From hereon, we shall denote \(f\lesssim g\) to mean \(f \le Cg\) where \(C\) is a constant depending only on \(\Omega\), \(d\), \(\varrho\), \(k\) and \(l\), but independent of the considered face/element and quantities \(f,g\). We shall also write \(f\approx g\) if \(f\lesssim g\) and \(g\lesssim f\). When necessary, we make some additional dependencies of the constant \(C\) explicit.

\begin{remark}[Small and numerous faces]\label{rem:small.faces}
	The regularity parameter \(\varrho\) is not impacted by the existence of faces \(F\in\FT\), in some elements \(T\), whose diameter \(\hF\) is much smaller than \(\hT\), or by \(\max_{T\in\Th}\CARD{\FT}\). This carries out to the hidden constants in \(\lesssim\) and means that all our estimates are valid even for meshes with (possibly) many small faces in otherwise relatively ``round'' elements.
\end{remark}

\subsection{Polynomial Spaces and Projectors}

Let \(X=T\in\Th\) or \(X=F\in\Fh\) be a face or an element in a mesh \(\Mh\), and let \(\POLY{l}(X)\) be the set of \(d_X\)-variate polynomials of degree \(\le l\) on \(X\), where \(d_X\) is the dimension of \(X\). We denote by \(\piX{0, l}:L^1(X)\to\POLY{l}(X)\) the \(\LTWO\)-orthogonal projector \cite[Section 1.3]{di-pietro.droniou:2020:hybrid} of order \(l\). It is defined by: for all \(v\in L^1(X)\) and \(w\in\POLY{l}(X)\),
\[
	\brac[X]{\piX{0, l}v,w} = \brac[X]{v, w}. 
\]

The space of piecewise discontinuous polynomial functions on an element boundary is given by
\begin{equation}\label{eq:bdry.space.def}
	\POLY{k}(\FT) \defeq \{v\in L^1(\bdryT):v|_F\in\POLY{k}(F)\quad\forall F\in\FT\}.
\end{equation}
The \(\LTWO\) orthogonal projector on an element boundary \(\piFTzr{k}:L^1(\bdryT)\to\POLY{k}(\FT)\) is then defined to satisfy \(\piFTzr{k}v|_F=\piFzr{k}v\) for all \(v\in L^1(\bdryT)\) and \(F\in\FT\). Alternatively, we could have defined \(\piFTzr{k}\) to be the unique element of \(\POLY{k}(\FT)\) that satisfies
\begin{equation}\label{eq:piFT.orthogonality}
	\brac[\bdryT]{v-\piFTzr{k}v, w} = 0 \qquad\forall w\in\POLY{k}(\FT). 
\end{equation}
In particular, \eqref{eq:piFT.orthogonality} allows us to replace \(v\) by \(\piFTzr{k}v\) whenever \(v\) occurs in an inner-product with a polynomial \(w\in\POLY{k}(\FT)\).

We require to define some weighted inner-products and norms to account for the diffusion tensor \(\matK\). For an element boundary \(\bdryT\), the weighted inner-product \(\brac[\matK, \bdryT]{\cdot, \cdot}:\LTWO(\bdryT)\times\LTWO(\bdryT)\to\R\) is defined for all \(v,w\in \LTWO(\bdryT)\) via
\begin{equation}\label{eq:ip.weighted.bdry}
	\brac[\matK, \bdryT]{v, w} \defeq \brac[\bdryT]{\matKT^{\frac12}\norT\,v,\matKT^{\frac12}\norT\,w}=\brac[\bdryT]{[\matKT\norT\cdot\norT]v, w}.
\end{equation}
For all \(r\ge 1\) and \(v\in \HS{r}(T)\) the weighted \(\HS{r}\)-seminorm \(\seminorm[\matK, \HS{r}(T)]{{\cdot}}\) is defined as
\begin{equation}\label{eq:norm.weighted.hr}
	\seminorm[\matK, \HS{r}(T)]{v} \defeq \seminorm[H^{r-1}(T)^d]{\matKT^{\frac12}\nabla v}. 
\end{equation}
We note the following norm equivalences:
\begin{align}
	\ulKT\norm[\bdryT]{v}^2 \le \norm[\matK, \bdryT]{v}^2 \le \olKT\norm[\bdryT]{v}^2 &\qquad
	\forall v\in \LTWO(\bdryT), \label{eq:norm.equiv.bdry}\\
	\ulKT\seminorm[\HS{r}(T)]{v}^2 \le \seminorm[\matK,\HS{r}(T)]{v}^2 \le \olKT\seminorm[\HS{r}(T)]{v}^2 &\qquad
	\forall v\in \HS{r}(T). \label{eq:norm.equiv.hr}
\end{align}
To properly handle the diffusion tensor in the problem considered, we use the weighted/oblique elliptic projector \cite[Section 3.1.2]{di-pietro.droniou:2020:hybrid} defined by, for all \(T\in\Th\), as \(\piKTe{k+1}:\WSP{1,1}(T) \to\POLY{k+1}(T) \) such that, for all \(v\in \WSP{1,1}(T)\),
\begin{align*}
	\brac[T]{\matKT\nabla \brac{v-\piKTe{k+1}v}, \nabla w} \eq 0\qquad\forall w\in\POLY{k+1}(T), \\
	\brac[T]{v-\piKTe{k+1}v,1} \eq 0. 
\end{align*}

\subsection{Local HHO Space}

For each element \(T\in\Th\), the local space of unknowns is defined as
\begin{equation}\label{eq:local.hho.space}
	\UTkl\defeq \POLY{l}(T) \times \POLY{k}(\FT). 
\end{equation}
To avoid excessive notation, when referring to the boundary term of \(\ulvT=(\vT,\vFT)\in\UTkl\) restricted to a face \(F\in\FT\), we shall simply write \(\vF\defeq \vFT|_F\). We also endow the space \(\UTkl\) with the seminorm \(\seminorm[1,\matK,\bdryT]{{\cdot}}:\UTkl\to\R\) defined for all \(\ulvT\in\UTkl\) as
\begin{equation}\label{eq:discrete.norm.def}
	\seminorm[1,\matK,\bdryT]{\ulvT} \defeq \hT^{-\frac12}\norm[\matK,\bdryT]{\vFT - \vT}. 
\end{equation}
\begin{remark}[The case \((k,l)=(0,-1)\)]
	For the case where the face unknowns are constant functions (\(k=0\)), it is possible to define an HHO method with \(l=-1\), corresponding to zero degrees of freedom on each element. This is achieved by defining the element term as a weighted sum of the face terms. We do not cover this case here, and refer the interested reader to \cite[Section 5.1]{di-pietro.droniou:2020:hybrid}.
\end{remark}
The local interpolator \(\ITkl:\HONE(T) \to\UTkl\) is defined for all \(v\in \HONE(T)\) as \(\ITkl v\defeq(\piTzr{l}v, \piFTzr{k}v)\).
On each element we locally reconstruct a potential from the space of unknowns via the operator \(\pKT{k+1}:\UTkl\to\POLY{k+1}(T)\) defined to satisfy, for all \(\ulvT\in\UTkl\),
\begin{align}
	\brac[T]{\matKT\nabla\pKT{k+1}\ulvT, \nabla w} \eq -\brac[T]{\vT, \nabla\cdot(\matKT \nabla w)} + \brac[\bdryT]{\vFT, \matKT\nabla w \cdot\norT} \qquad\forall w\in\POLY{k+1}(T), \label{eq:pKT.def} \\
	\brac[T]{\vT - \pKT{k+1}\ulvT, 1} \eq 0. \label{eq:pKT.closure}
\end{align}
We note that \(\pKT{k+1}\circ\ITkl=\piKTe{k+1}\) (see \cite[Eq. (3.24)]{di-pietro.droniou:2020:hybrid}).
This potential reconstruction allows us to approximate \(\a(u,v) \) on each element by the bilinear form \(\aKT:\UTkl\times\UTkl\to\R\) defined as
\begin{equation}\label{eq:local.form.def}
	\aKT(\uluT,\ulvT) \defeq \brac[T]{\matKT\nabla \pKT{k+1}\uluT, \nabla \pKT{k+1}\ulvT} + \sKT(\uluT, \ulvT), 
\end{equation}
where \(\sKT:\UTkl\times\UTkl\to\R\) is a local stabilisation term such that the following assumptions hold.

\begin{assumption}[Local stabilisation term]\label{assum:stability}
	The stabilisation term \(\sKT\) is a symmetric, positive semi-definite bilinear form that satisfies:
	\begin{enumerate}
		\item \emph{Coercivity.} For all \(\ulvT\in\UTkl\) it holds that
		\begin{equation}\label{eq:stab.coercivity}
			\seminorm[1,\matK,\bdryT]{\ulvT}^2 \lesssim \alphaT \aKT(\ulvT,\ulvT). 
		\end{equation}
		\item \emph{Consistency for smooth functions}. For all \(r\in\{0,\dots,k\}\) and \(v\in\HS{r+2}(T)\) it holds that
		\begin{equation}\label{eq:stab.consistency}
			\sKT(\ITkl v, \ITkl v) \lesssim \locerrorRHS{v}.
		\end{equation}
	\end{enumerate}
\end{assumption}

Examples of stabilisation forms satisfying Assumption \ref{assum:stability} are given in Section \ref{sec:examples.sT}.

\begin{remark}[Assumption on the stabilisation term]\label{rem:stab.assumption}
	We note here that the coercivity assumption \eqref{eq:stab.coercivity} on the stability \(\sKT\) is less restrictive than that given in \cite[Assumption 2.4]{di-pietro.droniou:2020:hybrid}. There, it is assumed that \(\aKT\) is coercive and continuous with respect to the seminorm defined by 
	\[
	\norm[1,\matK,T]{\ulvT}^2 \defeq \sum_{F\in\FT}\frac{\matKT\norTF\cdot\norTF}{\hF}\norm[F]{\vF-\vT}^2 + \seminorm[\matK,\HONE(T)]{\vT}^2,\nn
	\] 
	which for small faces can be significantly larger than the seminorm defined by \eqref{eq:discrete.norm.def}. We additionally remove the requirement that \(\aKT\) is continuous with respect to \(\norm[1,\matK,T]{{\cdot}}\). These weakened assumptions are key to obtaining error estimates that are independent of the number or smallness of faces in each element. The relaxed continuity assumption requires us to force consistency of the stabilisation form for smooth functions, which is a stricter assumption than polynomial consistency alone (combined with a continuity assumption, this consistency for smooth functions is equivalent to polynomial consistency \cite[Proposition 2.14]{di-pietro.droniou:2020:hybrid}). These weakened assumptions on \(\aKT\) are particularly useful when considering enriched schemes for which continuity is not guaranteed \cite{yemm:2021:xhho}.
\end{remark}

\subsection{Global Space and HHO Scheme}

The global space of unknowns is defined as 
\begin{equation}\label{eq:global.space.def}
	\Uhkl\defeq \Big\{\ulvh=((\vT)_{T\in\Th},(\vF)_{F\in\Fh})\,:\,\vT\in\POLY{l}(T)\quad\forall T\in\Th\,,
\vF\in\POLY{k}(F)\quad\forall F\in\Fh \Big\}.
\end{equation}
To account for the homogeneous boundary conditions, the following subspace is also introduced,
\begin{equation}\label{eq:global.space.hom.def}
	\Uhklzr\defeq\{\ulvh \in\Uhkl:v_{F}=0\quad\forall F\in\Fhb\}.
\end{equation}
For any \(\ulvh\in\Uhkl\) we denote its restriction to an element \(T\) by \(\ulvT=(\vT,\vFT)\in\UTkl\) (where, naturally, \(\vFT\) is defined form \((\vF)_{F\in\FT}\)). We also denote by \(\vh\) the piecewise polynomial function satisfying \(\vh|_T=\vT\) for all \(T\in\Th\).

The space of piecewise \(\HONE\) functions is defined as \(\HONE(\Th)\defeq\{\phi\in \LTWO(\Omega)\,:\nabla_h\phi\in \LTWO(\Omega)\}\), where \(\nabla_h\) denotes the broken gradient satisfying \((\nabla_h\phi)|_T = \nabla(\phi|_T)\) for all \(T\in\Th\). We endow the space \(\HONE(\Th)\) with the weighted seminorm
\[
	\seminorm[\matK,\HONE(\Th)]{v} \defeq \norm[\Omega]{\matK^\frac12\nabla_h v}. 
\]

The global operators \(\pKh{k+1}:\Uhkl\to\POLY{k+1}(\Th)\), \(\piKhe{k+1}:\HONE(\Th)\to\POLY{k+1}(\Th)\), and  \(\pihzr{l}:L^1(\Omega)\to\POLY{l}(\Th)\) are defined such that their actions restricted to an element \(T\in\Th\) are that of \(\pKT{k+1}\), \(\piKTe{k+1}\), and \(\piTzr{l}\) respectively. The global interpolator \(\Ihkl:\HONE(\Omega)\to\Uhkl\) is defined as \(\Ihkl v \defeq ((\piTzr{l}v)_{T\in\Th},(\piFzr{k}v)_{F\in\Fh})\). It follows that \(\pKh{k+1}\circ\Ihkl v = \piKhe{k+1}v\) for all \(v\in\HONE(\Omega)\).

The global bilinear forms \(\aKh:\Uhkl\times\Uhkl\to\mathbb{R}\) and \(\sKh:\Uhkl\times\Uhkl\to\mathbb{R}\) are defined as
\[
	\aKh(\uluh, \ulvh) \defeq \sum_{T\in\Th} \aKT(\uluT,\ulvT)
	\quad\textrm{and}\quad
	\sKh(\uluh, \ulvh) \defeq \sum_{T\in\Th} \sKT(\uluT,\ulvT).
\]
We also define the discrete energy norm \(\norm[\aKh]{{\cdot}}\) on \(\Uhklzr\) as
\begin{equation}\label{eq:energy.norm.def}
	\norm[\aKh]{\ulvh}\defeq \aKh(\ulvh,\ulvh)^\frac{1}{2} \qquad \forall \ulvh\in\Uhkl.
\end{equation}	
The HHO scheme reads: find \(\uluh\in\Uhklzr\) such that
\begin{equation}\label{eq:discrete.problem}
	\aKh(\uluh, \ulvh) = \ell_h(\ulvh) \qquad\forall \ulvh\in\Uhklzr, 
\end{equation}
where \(\ell_h:\Uhklzr\to\R\) is a linear form defined as
\begin{equation}\label{eq:discrete.src.term}
	\ell_h(\ulvh) \defeq \sum_{T\in\Th}\brac[T]{f,\vT}. 
\end{equation}

\subsection{Error Estimates}\label{sec:error.estimates}

We state here the error estimates, in various norms, that we will prove on the HHO scheme described above. In all these estimates, the hidden constants are robust with respect to the number or relative sizes of faces in each elements (see Remark \ref{rem:small.faces}). The first error estimate is given in discrete and continuous energy norms.

\begin{theorem}[Energy error]\label{thm:energy.error}
	Let \(r\in\{0,\dots,k\}\), \(u\in\HONEzr(\Omega)\cap\HS{r+2}(\Th)\) be the exact solution to the continuous problem \eqref{eq:weak.form}, and \(\uluh\in\Uhklzr\) be the solution to the HHO scheme \eqref{eq:discrete.problem}. The following energy error estimates hold:
	\begin{align}
		\norm[\aKh]{\uluh - \Ihkl u} \les \errorRHS{u}, \label{eq:discrete.energy}\\
		\seminorm[\matK,\HONE(\mathcal{T}_h)]{\pKh{k+1}\uluh - u} \les \errorRHS{u}. \label{eq:continuous.energy}
	\end{align}
\end{theorem}	

\begin{remark}[Diffusion weighted error estimates]
	The error estimates in Theorem \ref{thm:energy.error} are an improved version of those found in \cite[Section 3.1]{di-pietro.droniou:2020:hybrid}. Specifically, in this reference, each term \(\seminorm[\matK,\HS{r+2}(T)]{u}\) is replaced by \(\olKT^{1/2}\seminorm[\HS{r+2}(T)]{u}\), which are larger. This difference can be significant in practice. If \(\matKT\) is strongly anisotropic, then the solution \(u\) to \eqref{eq:strong.form} is expected to vary much less in directions of stronger diffusion. As a consequence, \(\olKT^{1/2}\seminorm[\HS{r+2}(T)]{u}\) could be much larger than \(\seminorm[\matK,\HS{r+2}(T)]{u}\) since, in the latter term, the large eigenvalues of \(\matKT\) could multiply small directional gradients of \(u\).
\end{remark}

The second error estimate concerns the jumps across faces of the reconstructed potentials; this estimate indicates that, as \(h\to 0\), these potentials become ``more and more'' conforming. To formalise this we first define the jump operator \(\jump[F]{\cdot}\) for all \(v\in \HONE(\Th)\) via
\[
	\jump[F]{v} \defeq (v|_{T_1})|_F - (v|_{T_2}) |_F,
\]
where \(\{T_1 , T_2\} =: \Th[F]\) are the two cells on each side of \(F\in\Fh\backslash\Fhb\); for boundary faces \(F\in\Fhb\), recalling that we are working with homogeneous Dirichlet boundary conditions we set, with \(T\in\Th\) such that \(F\in\FT\),
\[
	\jump[F]{v} \defeq (v|_T)|_F.
\]
We shall also define \(\jump[\bdryT]{\cdot}\) such that \(\jump[\bdryT]{v}|_F =\jump[F]{v}\).

\begin{theorem}[Convergence of the jumps]\label{thm:jump.error}
	Let \(r\in\{0,\dots,k\}\), \(u\in \HONEzr(\Omega) \cap \HS{r+2}(\Th)\) be the exact solution to the continuous problem \eqref{eq:weak.form}, and \(\uluh\in\Uhklzr\) be the solution to the HHO scheme \eqref{eq:discrete.problem}. The following jump error estimate holds:
	\begin{equation}\label{eq:jump.error}
		\BRAC[]{\sum_{T\in\Th} \frac{\ulKT}{\alphaT\hT}\norm[\bdryT]{\jump[\bdryT]{\pKh{k+1}\uluh}}^2}^\frac12 \lesssim \errorRHS{u}. 
	\end{equation}
\end{theorem}

\begin{remark}[Stability-independent jump estimate]
	The form of Theorem \ref{thm:jump.error} improves upon that found in \cite[Section 2.3.2]{di-pietro.droniou:2020:hybrid}. In particular, the error estimate is independent of the choice of stabilisation term, and the dependency on the diffusion tensor \(\matK\) is tracked.
\end{remark}

We finally turn to an estimate for the error induced under the \(\LTWO\)-norm. As is seen in \cite{di-pietro.droniou:2020:hybrid}, the convergence rates are found to be optimal only when the diffusion tensor is constant, and the problem is posed on a convex domain to ensure the elliptic regularity of the model. For this reason, we do not attempt to precisely track the dependency of the hidden constants with respect to the diffusion tensor.

\begin{theorem}[\(\LTWO\) error]\label{thm:l2.error}
	Let \(k,l\ge 1\), \(r\in\{1,\dots,k\}\), \(u\in \HONEzr(\Omega) \cap \HS{r+2}(\Th)\) be the exact solution to the continuous problem \eqref{eq:weak.form}, and \(\uluh\in\Uhklzr\) be the solution to the HHO scheme \eqref{eq:discrete.problem}. For any convex domain \(\Omega\), and constant diffusion tensor \(\matK\), the \(\LTWO\)-error satisfies the estimate
	\begin{equation}\label{eq:l2.error}
		\norm[\Omega]{\pKh{k+1}\uluh-u} \lesssim h^{r+2}\seminorm[\HS{r+2}(\Th)]{u}, 
	\end{equation}
	where the hidden constant additionally depends on the diffusion tensor \(\matK\).
\end{theorem}

\begin{remark}[The cases \(k,l<1\)]
	For the case \(k=0\), an error estimate in \(\LTWO\)-norm converging at the improved rate of \(h^2\) can be obtained \cite[Theorem 5.16]{di-pietro.droniou:2020:hybrid}. When \((k,l)=(1,0)\), numerical tests show that no improved rate of convergence in \(\LTWO\)-error can be expected compared to the \(h^2\) rate of convergence in energy norm.
\end{remark}

\section{Error Analysis}\label{sec:analysis}

We prove here the error estimates stated above, starting first with some preliminaries which consist in ensuring that
certain general inequalities, key to our analysis, are indeed robust with respect to the face sizes.

\subsection{Preliminary Results}\label{sec:prelim}

\subsubsection{Lebesgue, Sobolev and Trace Inequalities}

Let \(X\) be a face or an element in a mesh \(\Mh\). Under Assumption \ref{assum:star.shaped}, the inradius of \(X\) is equivalent (uniformly in \(h\)) to the diameter of \(X\) and thus, by \cite[Lemma 1.25]{di-pietro.droniou:2020:hybrid}, for all \((p_1,p_2)\in [1,\infty]\) the following direct and reverse Lebesgue inequality holds, with hidden multiplicative constant depending additionally on \(p_1\) and \(p_2\):
\begin{equation*}
	\norm[L^{p_1}(X)]{v} \approx |X|^{\frac{1}{p_1}-\frac{1}{p_2}}\norm[L^{p_2}(X)]{v}\qquad\forall v\in\POLY{l}(X).
\end{equation*}	
In the relation above, \(|X|\) denotes the \(d_X\)-dimensional measure of \(X\).

The following inverse Sobolev embedding is also highly relevant to the analysis required in this paper. We denote by \(\seminorm[\WSP{s,p}(X)]{{\cdot}}\) the \(\LP{p}\) norm of the \(s\) distributional derivative, and take \(m\le s\). It holds, by \cite[Corollary 1.29]{di-pietro.droniou:2020:hybrid}, that

\begin{equation}\label{eq:inverse.sobolev}
	\seminorm[\WSP{s,p}(X)]{v} \lesssim \hX^{m-s}\seminorm[\WSP{m,p}(X)]{v}\qquad\forall v\in\POLY{l}(X), 
\end{equation}
with hidden multiplicative constant depending additionally on \(m\), \(s\), and \(p\).

The following continuous trace inequality has been established in \cite[Section 2.6]{brenner.sung:2018:virtual} for sets that are star-shaped with respect to balls of radius comparable to the
set diameter. An extension to sets connected by star-shaped sets is not difficult, and provided for the sake of completeness.

\begin{lemma}[Continuous trace inequality]
	For all \(p\in [1,\infty)\) and \(v\in \WSP{1,p}(T)\), it holds
	\begin{equation}\label{eq:continuous.trace}
		\hT\norm[\LP{p}(\bdryT)]{v}^p \lesssim \norm[\LP{p}(T)]{v}^p+\hT^p\norm[\LP{p}(T)]{\nabla v}^p, 
	\end{equation}
	where the hidden constant depends additionally on \(p\), and the space \(\LP{p}(\bdryT)\) is endowed with the norm
	\[
		\norm[\LP{p}(\bdryT)]{v}:=\left(\sum_{F\in\FT}\norm[\LP{p}(F)]{v}^p\right)^{1/p}. 
	\]
\end{lemma}

\begin{proof}
	By assumption, \(T=\cup_{i=1}^N X_i\) with \(N\le \varrho^{-1}\), each \(X_i\) being star-shaped with respect to a ball of radius \(\gtrsim h_{X_i}\), and \(h_{X_i}\approx \hT\). Applying the trace inequality of \cite[Section 2.6]{brenner.sung:2018:virtual} to each \(X_i\) yields
	\begin{equation*}
		h_{X_i}\norm[\LP{p}(\partial X_i)]{v}^p \lesssim \norm[\LP{p}(X_i)]{v}^p+h_{X_i}^p\norm[\LP{p}(X_i)]{\nabla v}^p. 
	\end{equation*}
	Noticing that \(\bdryT\subset \cup_{i=1}^N\partial X_i\), we use \(h_{X_i}\approx \hT\) and sum the above inequality over \(i=1,\ldots,N\) to get
	\[
		\hT\norm[\LP{p}(\bdryT)]{v}^p \lesssim \sum_{i=1}^N \norm[\LP{p}(X_i)]{v}^p + \hT^p\sum_{i=1}^N\norm[\LP{p}(X_i)]{\nabla v}^p
		\lesssim N \norm[\LP{p}(T)]{v}^p + \hT^p N\norm[\LP{p}(T)]{\nabla v}^p,
	\]
	where the second inequality follows since each \(X_i\) is contained in \(T\). The proof is concluded by recalling that \(N\le\varrho^{-1}\).
\end{proof}

By combining \eqref{eq:inverse.sobolev} and \eqref{eq:continuous.trace} with \(s=1\) and \(m=0\), we have for all \(v\in\POLY{l}( T) \) the following discrete trace inequality, in which the hidden constant depends additionally on \(p\):
\begin{equation} \label{eq:discrete.trace}
	\hT\norm[\LP{p}(\bdryT)]{v}^p \lesssim \norm[\LP{p}(T)]{v}^p.
\end{equation}

\begin{remark}
	In \cite{cangiani.dong:2019:version} a sharp discrete trace inequality for the case $p=2$ is shown with fully explicit constant, and weaker assumptions on the element $T$. The same reference also provides explicit constants for the $\HONE$ to $\LTWO$ inverse inequality. 
	
	The trace inequality \eqref{eq:discrete.trace}, with a constant that does not depend on the relative sizes or number of faces in $T$, is an essential tool for establishing \eqref{eq:stab.coercivity} and \eqref{eq:stab.consistency} for specific stabilisation bilinear forms. In this respect, our analysis is therefore based on the same basic tools as used for DG methods in \cite{cangiani.dong:2019:version}. However, because of the presence of the higher-order potential reconstruction $\pKT{k+1}$, and the need to handle two different local seminorms (namely, $\seminorm[1,\matK,\bdryT]{\cdot}$ and $\aKT(\cdot,\cdot)^{\frac12}$), the HHO analysis differs in several aspects from the DG analysis.
\end{remark}

\subsubsection{Projectors on Polynomial Spaces}

The following properties of \(\piX{0,l}\) are taken from \cite[Section 1.3]{di-pietro.droniou:2020:hybrid}.

\begin{lemma}[Approximation and boundedness properties of the \(\LTWO\)-projector on a face/element]
	For all \(s\in\{0,\dots,l+1\}\), \(m\in\{0,\dots,s\}\), \(p\ge 1\) and \(v\in \WSP{s,p}(X)\), the \(\LTWO\)-orthogonal projector satisfies:
	\begin{equation}\label{eq:piTzr.approx}
		\seminorm[\WSP{m,p}(X)]{v-\piX{0,l}v} \lesssim \hX^{s-m}\seminorm[\WSP{s,p}(X)]{v}\quad\textrm{and}\quad
		\seminorm[\WSP{m,p}(X)]{\piX{0,l} v} \lesssim \seminorm[\WSP{m,p}(X)]{v},
	\end{equation}
	where the hidden constant additionally depend on \(m\), \(s\), and \(p\).
\end{lemma}

\begin{lemma}[Boundedness of the \(\LTWO\)-orthogonal projector on an element boundary]\label{lem:piFT.bound}
	Let \(p\ge 1\). Then it holds, with hidden multiplicative constant depending additionally on \(p\), that
	\begin{equation}\label{eq:piFT.bound}
		\norm[\LP{p}(\bdryT)]{\piFTzr{k}v} \lesssim \norm[\LP{p}(\bdryT)]{v}\qquad\forall v\in \LP{p}(\bdryT). 
	\end{equation}
\end{lemma}	

\begin{proof}
	Simply raise the bound in \eqref{eq:piTzr.approx} with \(X=F\in\FT\) and \(m=0\) to the power \(p\) and sum over the faces \(F\in\FT\) (or take the maximum over the faces in the case \(p=+\infty\)).	
\end{proof}

\begin{lemma}[Properties of the \(\LTWO\)-orthogonal projector on an element boun\-dary]\label{lem:L2proj.bdry}
	Let \(p\ge 1\) and \(s\in\{1,\dots,\min(k,l)+1\}\). Then it holds, with hidden multiplicative constant depending additionally on \(p\) and \(s\), that
	\begin{equation}\label{eq:piFT.approx}
		\hT^{\frac{1}{p}}\norm[\LP{p}(\bdryT)]{\piFTzr{k}v-\piTzr{l}v} \lesssim \hT^s\seminorm[\WSP{s,p}(T)]{v}\qquad\forall v\in \WSP{s,p}(T). 
	\end{equation}
\end{lemma}	
	
\begin{proof}
	We first consider the case \(k \ge l\). By the polynomial consistency of projectors \cite[Proposition 1.35]{di-pietro.droniou:2020:hybrid}, it holds that \((\piTzr{l}v)|_{\bdryT}=\piFTzr{k}(\piTzr{l}v)|_{\bdryT}\) and thus, using the boundedness \eqref{eq:piFT.bound} of \(\piFTzr{k}\) and the continuous trace inequality \eqref{eq:continuous.trace},
	\begin{align}
		\hT^{\frac1p}\norm[\LP{p}(\bdryT)]{\piFTzr{k}v-\piTzr{l}v} = \hT^{\frac1p}{}&\norm[\LP{p}(\bdryT)]{\piFTzr{k}(v-\piTzr{l}v)} 
		\lesssim \hT^{\frac1p}\norm[\LP{p}(\bdryT)]{v-\piTzr{l}v} \nl
		\les \norm[\LP{p}(T)]{v-\piTzr{l}v} + \hT\seminorm[\WSP{1,p}(T)]{v-\piTzr{l}v}.  \label{eq:piFT.approx.proof.1}
	\end{align}
	The conclusion follows from the approximation properties and boundedness (see \eqref{eq:piTzr.approx}) of the \(\LTWO\)-orthogonal projector. 
	
	Consider now \(k < l\). By a triangle inequality
	\begin{equation}\label{eq:piFT.approx.proof.2}
		\norm[\LP{p}(\bdryT)]{\piFTzr{k}v-\piTzr{l}v} \le \norm[\LP{p}(\bdryT)]{\piFTzr{k}v-\piTzr{k}v} + \norm[\LP{p}(\bdryT)]{\piTzr{k}v-\piTzr{l}v}. 
	\end{equation}
	The first term of \eqref{eq:piFT.approx.proof.2} is of the form covered by \eqref{eq:piFT.approx.proof.1}. The second term may be bounded using a discrete trace inequality \eqref{eq:discrete.trace}:
	\[
		\hT^{\frac1p}\norm[\LP{p}(\bdryT)]{\piTzr{k}v-\piTzr{l}v} \lesssim \norm[\LP{p}(T)]{\piTzr{k}v-\piTzr{l}v}
		\lesssim \norm[\LP{p}(T)]{\piTzr{k}v-v},
	\]
	where the conclusion follows from \(\piTzr{k}=\piTzr{l}\piTzr{k}\) (since \(l>k\)), and the boundedness \eqref{eq:piTzr.approx} of \(\piTzr{l}\).
	The conclusion then follows as before from \eqref{eq:piTzr.approx}.
\end{proof}

The next corollary is a consequence of the norm equivalences \eqref{eq:norm.equiv.bdry}--\eqref{eq:norm.equiv.hr} and Lemma \ref{lem:L2proj.bdry}.
\begin{corollary}[Properties of the \(\LTWO\)-orthogonal projector on an element boundary with respect to weighted norms]
	Let \(l\) and \(k\) be non-negative integers, and \(s\in\{1,\dots, \min(k,l) +1\}\). Then it holds, with hidden multiplicative constant depending additionally on \(s\), that
	\begin{equation}\label{eq:piFT.weighted.approx}
		\hT^{\frac{1}{2}}\norm[\matK,\bdryT]{\piFTzr{k}v-\piTzr{l}v} \lesssim \alphaT^\frac12\hT^s\seminorm[\matK,\HS{s}(T)]{v}\qquad\forall v\in \HS{s}(T). 
	\end{equation}
\end{corollary}

\begin{lemma}[Approximation properties of the oblique elliptic projector]\label{lem:piKTe.approx}
	For all \(s\in\{1,\dots,k+2\}\), \(m\in\{1,\dots,s\}\) and \(v\in \HS{s}(T) \) the oblique elliptic projector satisfies
	\begin{equation}\label{eq:piKTe.approx}
		\seminorm[\matK,\HS{m}(T)]{v-\piKTe{k+1}v} \lesssim \hT^{s-m}\seminorm[\matK,\HS{s}(T)]{v},
	\end{equation}
	where the hidden constant depends additionally on \(s\) and \(m\).
\end{lemma}

The proof of Lemma \ref{lem:piKTe.approx} is analogous to that for the unweighted elliptic projector found in \cite{di-pietro.droniou:2020:hybrid}. However, it relies on the approximation of averaged Taylor polynomials (see \cite[Section 4.1]{brenner.scott:2007:mathematical}) in weighted seminorms. Since these approximation properties are not standard, we detail the proof below. Let us start with preliminary results.

For any integer \(s\ge 1\), the averaged Taylor polynomial operator \(Q^s:C^{s-1}(T)\to\POLY{s-1}(T)\) defined in \cite[Definition 4.1.3]{brenner.scott:2007:mathematical} is linear. Therefore, the remainder operator \(R^s:C^{s-1}(T)\to C^{s-1}(T)\) defined by \(R^s v \defeq v - Q^s v\) is also linear. By \cite[Proposition 4.1.17]{brenner.scott:2007:mathematical}, for any \(\nu\in\N^d\) such that \(|\nu| \le s\), \(Q^s\) satisfies
\[
	D^\nu Q^sv = Q^{s-|\nu|}D^\nu v\qquad\forall v\in H^{|\nu|}(T),
\]
where \(D^\nu\) denotes the distributional derivative. In particular, we infer for all \(j\in\{1,\dots,d\}\)
\begin{equation}\label{eq:taylor.prop}
	\partial_j R^sv = R^{s-1} \partial_j v\qquad\forall v\in \HONE(T). 
\end{equation}
The remainder term \(R^sv\) satisfies for all \(m\in\{0,\dots,s\}\)
\begin{equation}\label{eq:taylor.approx}
	\seminorm[\HS{m}(T)]{R^sv} \lesssim \hT^{s-m}\seminorm[\HS{s}(T)]{v}\qquad\forall v\in \HS{s}(T), 
\end{equation}
with hidden constant depending additionally on \(m\) and \(s\). A proof of \eqref{eq:taylor.approx} is provided on star-shaped sets in \cite{brenner.scott:2007:mathematical} and extended to sets connected by star shaped sets in \cite[Theorem 1.50]{di-pietro.droniou:2020:hybrid}.

\begin{proof}[Proof of Lemma \ref{lem:piKTe.approx}]
	Let \(v\in \HS{s}(T)\) and \(m\in\{1,\ldots,s\}\). By the linearity of \(R^s\) and \eqref{eq:taylor.prop}, and since \(\matKT\) is constant, we extend the approximation property \eqref{eq:taylor.approx} (with \((m-1,s-1)\) instead of \((m,s)\)) to the \(\matK\)-weighted seminorm as follows:
	\begin{align}
		\seminorm[\matK,\HS{m}(T)]{R^sv} \eq \seminorm[\HS{m-1}(T)^d]{\matKT^\frac12 \nabla R^sv} \nl 
		\eq \BRAC{\sum_{i=1}^d \SEMINORM[\HS{m-1}(T)]{\sum_{j=1}^d \BRAC[ij]{ \matKT^\frac12} \partial_j R^sv}^2}^\frac12 \nl
		\eq \BRAC{\sum_{i=1}^d \SEMINORM[\HS{m-1}(T)]{R^{s-1}\sum_{j=1}^d \BRAC[ij]{ \matKT^\frac12} \partial_j v}^2}^\frac12 \nl
		\les \hT^{s-m}\BRAC{\sum_{i=1}^d \SEMINORM[H^{s-1}(T)]{\sum_{j=1}^d \BRAC[ij]{ \matKT^\frac12} \partial_j v}^2}^\frac12 
		= \hT^{s-m}\seminorm[\matK,\HS{s}(T)]{v}. \label{eq:piKTe.approx.proof.1}
	\end{align}
	By the polynomial consistency of \(Q^s\), we have \(v-\piKTe{k+1}v = R^sv-\piKTe{k+1}R^sv\)	for all \(s\le k+2\). Therefore a triangle inequality yields
	\[
		\seminorm[\matK,\HS{m}(T)]{v-\piKTe{k+1}v} \le \seminorm[\matK,\HS{m}(T)]{R^sv} + \seminorm[\matK,\HS{m}(T)]{\piKTe{k+1}R^sv}. 
	\]
	The term \(\seminorm[\matK,\HS{m}(T)]{R^sv}\) already satisfies the desired bound due to equation \eqref{eq:piKTe.approx.proof.1}. For the second term, use an inverse Sobolev inequality \eqref{eq:inverse.sobolev} to get 
	\begin{align}
		\seminorm[\matK,\HS{m}(T)]{\piKTe{k+1}R^sv} = \seminorm[\HS{m-1}(T)]{\matKT^\frac12\nabla\piKTe{k+1}R^sv} \les \hT^{1-m}\norm[T]{\matKT^\frac12\nabla\piKTe{k+1}R^sv} \nl
		\eq \hT^{1-m}\seminorm[\matK,\HONE(T)]{\piKTe{k+1}R^sv}. \nn
	\end{align} 
	From the definition of the oblique elliptic projector it holds that
	\[
		\seminorm[\matK,\HONE(T)]{\piKTe{k+1}R^sv}^2 = \brac[T]{\matKT\nabla\piKTe{k+1}R^sv,\nabla R^sv}. 
	\]
	We may then infer from a Cauchy--Schwarz inequality and equation \eqref{eq:piKTe.approx.proof.1} that
	\[
		\hT^{1-m}\seminorm[\matK,\HONE(T)]{\piKTe{k+1}R^sv} \le \hT^{1-m}\seminorm[\matK,\HONE(T)]{R^sv} 
		\lesssim \hT^{1-m}\hT^{s-1}\seminorm[\matK,\HS{s}(T)]{v} = \hT^{s-m}\seminorm[\matK,\HS{s}(T)]{v}, 
	\]
	which concludes the proof.
\end{proof}

\subsection{Proof of the error estimates}

For a Banach space \((L,\norm[L]{\cdot})\), the dual norm of a linear form \(g:L\to \R\) is defined as
\begin{equation}\label{eq:dual.norm.def}
	\norm[L^*]{g} \defeq \sup_{x\in L\backslash\{0\}}\frac{\seminorm{g(x)}}{\norm[L]{x}}. 
\end{equation}
We denote the dual norm on the Banach space \((\Uhklzr,\norm[\aKh]{{\cdot}})\) by \(\norm[\aKh,*]{{\cdot}}\). The following lemma gives an estimate on the consistency error, which is at the core of all the error estimates.

\begin{lemma}[Consistency error]
	The consistency error \(\calE_h( w;\cdot) :\Uhklzr\to\R\) is a linear form defined for all \(\ulvh \in\Uhklzr\) as
	\[
		\calE_h(w; \ulvh) \defeq -\brac[\Omega]{\nabla\cdot(\matK\nabla w), \vh} - \aKh(\Ihkl w, \ulvh),
	\]
	for any \(w\in \HONEzr( \Omega) \) such that \(\nabla \cdot(\matK\nabla w)\in \LTWO(\Omega)\). For all \(r\in\{0,\dots,k\}\) and such a \(w\) that additionally satisfies \(w\in\HS{r+2}(\Th)\), the consistency error satisfies
	\begin{equation}\label{eq:cons.error}
		\norm[\aKh, *]{\calE_h(w; \cdot)} \lesssim \errorRHS{w}. 
	\end{equation}
\end{lemma}

\begin{proof} 
	The following equality has been established in the proof of \cite[Lemma 3.15]{di-pietro.droniou:2020:hybrid}
	\begin{equation}\label{eq:cons.error.equality}
		\calE_h(w; \ulvh)  = \sum_{T\in\Th}\brac[\bdryT]{\matKT\nabla(w-\piKTe{k+1}w)\cdot\norT, \vFT - \vT} - \sKh(\Ihkl w, \ulvh), 
	\end{equation}
	The stabilisation term in \eqref{eq:cons.error.equality} is easily bounded due to consistency \eqref{eq:stab.consistency} and the use of Cauchy-Schwarz:
	\begin{equation}\label{eq:stab.consistency.bound}
		\seminorm{\sKh(\Ihkl w, \ulvh)}^2 \le \sKh(\Ihkl w,\Ihkl w) \sKh(\ulvh,\ulvh) 
		\lesssim\norm[\aKh]{\ulvh}^2 \sum_{T\in\Th}\locerrorRHS{w}. 
	\end{equation}
	We turn to the element-wise consistency term of \eqref{eq:cons.error.equality}. Invoking a Cauchy-Schwarz inequality first on the dot product, then on the integral yields
	\begin{align}
		\seminorm{\brac[\bdryT]{\matKT\nabla(w - \piKTe{k+1}w)\cdot\norT, \vFT - \vT}} 
		\eq \seminorm{\brac[\bdryT]{\matKT^\frac12\nabla(w-\piKTe{k+1}w), \matKT^\frac12\norT(\vFT - \vT)}} \nl
		\lea \norm[\bdryT]{\matKT^\frac12\nabla(w-\piKTe{k+1}w)}\norm[\bdryT]{\matKT^\frac12\norT(\vFT - \vT)} \nl
		\eq \hT^\frac{1}{2}\norm[\bdryT]{\matKT^\frac12\nabla(w-\piKTe{k+1}w)} \seminorm[1,\matK,\bdryT]{\ulvT} \nl
		\les \hT^\frac{1}{2}\norm[\bdryT]{\matKT^\frac12\nabla ( w-\piKTe{k+1}w)} \alphaT^\frac12 \aKT (\ulvT,\ulvT)^\frac{1}{2}, \label{eq:cons.error.proof.1}
	\end{align}
	where the last inequality of \eqref{eq:cons.error.proof.1} is due to the stability condition \eqref{eq:stab.coercivity}. We invoke the continuous trace inequality \eqref{eq:continuous.trace} on the term \(\norm[\bdryT]{\matKT^\frac12\nabla(w-\piKTe{k+1}w)}\) to yield
	\begin{equation}\label{eq:cons.error.proof.2}
		\hT^\frac{1}{2}\norm[\bdryT]{\matKT^\frac12\nabla( w-\piKTe{k+1}w)} \lesssim \seminorm[\matK,\HONE(T)]{w-\piKTe{k+1}w} + \hT\seminorm[\matK,H^2(T)]{w-\piKTe{k+1}w}. 
	\end{equation}
	It then follows from the approximation property of the oblique elliptic projector \eqref{eq:piKTe.approx} that
	\begin{equation}\label{eq:cons.error.proof.3}
		\hT^\frac{1}{2}\norm[\bdryT]{\matKT^\frac12\nabla( w-\piKTe{k+1}w)} \lesssim \hT^{r+1}\seminorm[\matK,\HS{r+2}(T)]{w}. 
	\end{equation}
	Thus, substituting \eqref{eq:cons.error.proof.3} into \eqref{eq:cons.error.proof.1} yields
	\begin{equation}\label{eq:cons.error.proof.4}
		\seminorm{\brac[\bdryT]{\matKT\nabla ( w-\piKTe{k+1}w)\cdot\norT,  \, \vFT - \vT}} 
		\lesssim \alphaT^\frac12\hT^{r+1}\seminorm[\matK,\HS{r+2}(T)]{w} \aKT (\ulvT,\ulvT)^\frac{1}{2}. 
	\end{equation}
	Invoking a triangle inequality on \eqref{eq:cons.error.equality} and applying the bounds \eqref{eq:stab.consistency.bound} and \eqref{eq:cons.error.proof.4} yields the required result:
	\begin{align}
		|\calE_h(w; \ulvh)| \les \sum_{T\in\Th}\hT^{r+1}\seminorm[\matK,\HS{r+2}(T)]{w}\alphaT^\frac12 \aKT(\ulvT, \ulvT)^\frac12 +\norm[\aKh]{\ulvh}\errorRHS{w} \nl
		\lea 2\norm[\aKh]{\ulvh}\errorRHS{w}. \nn\qquad\qedhere
	\end{align}
\end{proof}

We can now prove our three theorems on error estimates.

\begin{proof}[Proof of Theorem \ref{thm:energy.error} (energy errors)]~\\
	\textbf{Step 1}: Proof of \eqref{eq:discrete.energy}.
	
	It is clear by the definition \eqref{eq:energy.norm.def} of \(\norm[\aKh]{{\cdot}}\) that \(\aKh( \uluh,\ulvh) \) is coercive with respect to \(\norm[\aKh]{{\cdot}}\) with coercivity constant equal to \(1\). We also note that
	\[
		\ell_h(\ulvh) = \sum_{T\in\Th}\brac[T]{f, \vT} = \brac[\Omega]{f, \vh} = -(\nabla\cdot(\matK\nabla u), \vh).
	\]
	The conclusion then follows from \eqref{eq:cons.error} and the Third Strang Lemma \cite{di-pietro.droniou:2018:third} that gives
	\[
		\norm[\aKh]{\uluh - \Ihkl u} \le \sup_{\ulvh\in\Uhklzr, \ulvh\ne 0}\frac{|\calE_h(u; \ulvh)|}{\norm[\aKh]{\ulvh}}. 
	\]
	
	\noindent\textbf{Step 2}: Proof of \eqref{eq:continuous.energy}.
	
	We begin the proof by invoking a triangle inequality on each element \(T\in\Th\) as follows,
	\[
		\seminorm[\matK,\HONE(T)]{\pKT{k+1} \uluT - u} \le \seminorm[\matK,\HONE(T)]{\pKT{k+1} \uluT - \piKTe{k+1}u} + \seminorm[\matK,\HONE(T)]{\piKTe{k+1}u - u}. 
	\]
	By the approximation properties \eqref{eq:piKTe.approx} of the oblique elliptic projector, and recalling that \(\piKTe{k+1}u=\pKT{k+1}\ITkl u\), we infer
	\begin{align}
		\seminorm[\matK,\HONE(T)]{\pKT{k+1} \uluT - u}  \les \seminorm[\matK,\HONE(T)]{\pKT{k+1}(\uluT - \ITkl u)} + \hT^{r+1}\seminorm[\matK,\HS{r+2}(T)]{u} \nl
		\lea \aKT(\uluT - \ITkl u, \uluT - \ITkl u)^\frac12 + \hT^{r+1}\seminorm[\matK,\HS{r+2}(T)]{u}. \nn
	\end{align}
	Squaring this relation, summing over all \(T\in\Th\), applying the discrete energy error estimate \eqref{eq:discrete.energy}, and recalling that \(\alphaT\ge1\) yields the desired result.
\end{proof}

\begin{proof}[Proof of Theorem \ref{thm:jump.error} (Estimate on the jumps)]
	Take \(F\) an internal face between elements \(\Th[F]=\{T_1, T_2\}\) and consider
	\[
		\jump[F]{\pKh{k+1}\ulvh} = (\rmp_{\matK,T_1}^{k+1}\ul{v}_{T_1} - \rmp_{\matK,T_2}^{k+1}\ul{v}_{T_2})\big|_F 
		= (\rmp_{\matK,T_1}^{k+1}\ul{v}_{T_1} - \vF + \vF - \rmp_{\matK,T_2}^{k+1}\ul{v}_{T_2})\big|_F.
	\]
	Therefore
	\[
		\norm[F]{\jump[F]{\pKh{k+1}\ulvh}} \le \sum_{T\in\Th[F]}\norm[F]{\pKT{k+1}\ulvT - \vF}. 
	\]
	Since \(\vF = 0\) whenever \(F\in\Fhb\) is a boundary face, this relation is obviously true also for such faces -- for which \(\Th[F]\) reduces to one element. Square this relation, take \(T\in\Th\), sum this relation over \(F\in\FT\), multiply by \(\frac{\ulKT}{\alphaT\hT}\) and sum finally over \(T\in\Th\) to get	
	\begin{equation}\label{eq:jump.error.proof.1}
		\sum_{T\in\Th}\frac{\ulKT}{\alphaT\hT}\norm[\bdryT]{\jump[\bdryT]{\pKh{k+1}\ulvh}}^2 \le 4\sum_{T\in\Th}\frac{\ulKT}{\alphaT\hT}\norm[\bdryT]{\pKT{k+1}\ulvT-\vFT}^2.
	\end{equation}
	Consider, by a triangle inequality and the boundary norm equivalence \eqref{eq:norm.equiv.bdry}, together with the fact that \(\ulKT\le \matKT\norTF\cdot\norTF\),
	\begin{equation}\label{eq:jump.error.proof.2}
		\frac{\ulKT}{\alphaT\hT}\norm[\bdryT]{\pKT{k+1}\ulvT-\vFT}^2
		\lesssim \frac{\ulKT}{\alphaT\hT}\norm[\bdryT]{\pKT{k+1}\ulvT - \vT}^2 + \alphaT^{-1}\hT^{-1}\norm[\matK,\bdryT]{\vT-\vFT}^2. 
	\end{equation}
	The first term of \eqref{eq:jump.error.proof.2} may be bounded as follows,
	\begin{align}
		\frac{\ulKT}{\alphaT\hT}\norm[\bdryT]{\pKT{k+1}\ulvT - v_{T}}^2 
		\les \frac{\ulKT}{\alphaT\hT}( \hT^{-1}\norm[T]{\pKT{k+1}\ulvT - \vT}^2 + \hT\seminorm[\HONE(T)]{\pKT{k+1}\ulvT - \vT}^2) \nl
		\les \frac{\ulKT}{\alphaT}\seminorm[\HONE(T)]{\pKT{k+1}\ulvT - v_{T}}^2 
		\lesssim \alphaT^{-1}\seminorm[\matK,\HONE(T)]{\pKT{k+1}\ulvT - v_{T}}^2, \label{eq:jump.error.proof.3}
	\end{align}
	where the first line of \eqref{eq:jump.error.proof.3} is due to the continuous trace inequality \eqref{eq:continuous.trace}, the second follows from a Poincar\'{e}--Wirtinger inequality, and the last line is due to the norm equivalence \eqref{eq:norm.equiv.hr}. Consider integrating by parts the defining equation of the potential reconstruction \eqref{eq:pKT.def} to yield
	\begin{equation}\label{eq:jump.error.proof.4}
		\brac[T]{\matKT\nabla(\pKT{k+1}\ulvT - \vT) , \nabla w} = \brac[\bdryT]{\matKT^{\frac12}\norT(\vFT - \vT), \matKT^\frac{1}{2}\nabla w}. 
	\end{equation}
	Setting \(w=\pKT{k+1}\ulvT-\vT\in\POLY{k+1}(T)\) yields
	\begin{align}
		\seminorm[\matK,\HONE(T)]{\pKT{k+1}\ulvT-\vT}^2 \eq \brac[\bdryT]{\matKT^{\frac12}\norT(\vFT - \vT), \matKT^\frac{1}{2}\nabla(\pKT{k+1}\ulvT - \vT)} \nl
		\lea \norm[\matK,\bdryT]{\vFT - \vT}\norm[\bdryT]{\matKT^\frac12\nabla (\pKT{k+1}\ulvT - \vT)}. \label{eq:jump.error.proof.5}
	\end{align}
	By invoking the discrete trace inequality \eqref{eq:discrete.trace} on the second term of \eqref{eq:jump.error.proof.5} and simplifying by \(\seminorm[\matK,\HONE(T)]{\pKT{k+1}\ulvT-\vT}\) we can conclude that
	\begin{equation}\label{eq:jump.error.proof.6}
		\seminorm[\matK,\HONE(T)]{\pKT{k+1}\ulvT-\vT} \lesssim \hT^{-\frac{1}{2}}\norm[\matK,\bdryT]{\vFT - \vT}. 
	\end{equation}
	Thus, combining \eqref{eq:jump.error.proof.6}, \eqref{eq:jump.error.proof.3} and \eqref{eq:jump.error.proof.2}, and invoking the stability assumption \eqref{eq:stab.coercivity}, we infer that
	\begin{equation}\label{eq:jump.error.proof.7}
		\frac{\ulKT}{\alphaT\hT}\norm[\bdryT]{\pKT{k+1}\ulvT - \vFT}^2 \lesssim \alphaT^{-1}\hT^{-1}\norm[\matK,\bdryT]{\vFT - \vT}^2 
		\lesssim \alphaT^{-1}\seminorm[1,\matK,\bdryT]{\ulvT}^2 \lesssim \aKT(\ulvT,\ulvT). 
	\end{equation}
	Substituting \eqref{eq:jump.error.proof.7} into \eqref{eq:jump.error.proof.1} yields
	\[
		\sum_{T\in\Th}\frac{\ulKT}{\alphaT\hT}\norm[\bdryT]{\jump[\bdryT]{\pKh{k+1}\ulvh}}^2 \lesssim \norm[\aKh]{\ulvh}^2. 
	\]
	Setting \(\ulvh = \uluh - \Ihkl u\) and invoking the discrete energy error estimate \eqref{eq:discrete.energy} yields
	\begin{equation}\label{eq:jump.error.proof.8}
		\sum_{T\in\Th}\frac{\ulKT}{\alphaT\hT}\norm[\bdryT]{\jump[\bdryT]{\pKh{k+1}(\uluh - \Ihkl u)}}^2 \lesssim \sum_{T\in\Th}\locerrorRHS{u}.
	\end{equation}		
	Since \(u\in \HONEzr(\Omega)\), we have \(\jump[F]{u}=0\) for all face \(F\in\Fh\). Hence, reasoning as above,
	\[
		\sum_{T\in\Th}\frac{\ulKT}{\alphaT \hT}\norm[\bdryT]{\jump[\bdryT]{\piKhe{k+1}u}}^2 \le 4\sum_{T\in\Th}\frac{\ulKT}{\alphaT \hT}\norm[\bdryT]{\piKTe{k+1}u-u}^2.
	\]
	Using the continuous trace inequality followed by the approximation property \eqref{eq:piKTe.approx}, we infer
	\begin{equation}\label{eq:jump.error.proof.9}
		\sum_{T\in\Th}\frac{\ulKT}{\alphaT \hT}\norm[\bdryT]{\jump[\bdryT]{\piKhe{k+1}u}}^2 \lesssim \sum_{T\in\Th}\locerrorRHS{u}.
	\end{equation}	
	The result follows from \eqref{eq:jump.error.proof.9}, \eqref{eq:jump.error.proof.8}, the commutation property \(\pKh{k+1}\Ihkl=\piKhe{k+1}\) and a triangle inequality.
\end{proof}

\begin{proof}[Proof of Theorem \ref{thm:l2.error} (\(\LTWO\) error)]
	As this proof is given in \cite{di-pietro.droniou:2020:hybrid}, and we only require to show its validity under the presence of small faces, only a brief outline of the steps is provided here. Following the arguments in the proof of \cite[Theorem 2.32]{di-pietro.droniou:2020:hybrid} (see in particular Eqs. (2.72)--(2.74) therein), we obtain
	\begin{equation}\label{eq:l2.error.proof.1}
		\norm[\Omega]{\pKh{k+1}\uluh - u}^2	\le \norm[\Omega]{\piKhe{k+1}u-u}^2 + h^2\norm[\aKh]{\uluh-\Ihkl u}^2 + \norm[\Omega]{u_h-\pihzr{l}u}^2. 
	\end{equation}
	The first two terms of \eqref{eq:l2.error.proof.1} satisfy the required bound by \eqref{eq:piKTe.approx} and \eqref{eq:discrete.energy} respectively. By a fully discrete Aubin--Nitsche trick \cite{di-pietro.droniou:2018:third} the last term is bounded by 
	\begin{equation}\label{eq:l2.error.proof.2}
		\norm[\Omega]{u_h-\pihzr{l}u}^2 \le \norm[\aKh]{\uluh-\Ihkl u}\sup_{g\in \LTWO(\Omega) :\norm[\Omega]{g}\le 1}\norm[\aKh^*]{\calE_h(z_g;\cdot)}
		+ \sup_{g\in \LTWO(\Omega) :\norm[\Omega]{g}\le 1}\seminorm{\calE_h( u;\Ihkl z_g)}, 
	\end{equation}
	where \(z_g\) is the solution to the dual problem: \(\a(v,z_g)=\brac[\Omega]{g, v}\) for all \(v\in \HONEzr(\Omega)\).
	
	The first term of \eqref{eq:l2.error.proof.2} is bounded by the consistency error \eqref{eq:cons.error} (with \(r=0\)), the discrete energy error \eqref{eq:discrete.energy} and the bound \(|z_g|_{H^{2}(\Omega)}\lesssim \norm[\Omega]{g}\le1\) (see \cite{grisvard:1992:singularities}). For the second term of \eqref{eq:l2.error.proof.2} we turn to equation \eqref{eq:cons.error.equality} to infer
	\[
		\calE_h(u; \Ihkl z_g) = \sum_{T\in\Th}\brac[\bdryT]{\matKT\nabla(u - \piKhe{k+1}u)\cdot\norT,\piFTzr{k}z_g - \piTzr{l}z_g}
		- \sKh(\Ihkl u, \Ihkl z_g). \label{eq:l2.error.proof.3}
	\]
	The stability term in \eqref{eq:l2.error.proof.2} is bounded by the use of a Cauchy--Schwarz inequality and the stability consistency assumption \eqref{eq:stab.consistency}. It only remains to bound the first term, which is done by writing
	\begin{align}
		|\brac[\bdryT]{\matKT\nabla(u-\piKhe{k+1}u)\cdot\norT, \piFTzr{k}z_g - \piTzr{l}z_g}| 
		\les \norm[\bdryT]{\matKT\nabla(u - \piKhe{k+1}u)}\norm[\bdryT]{\piFTzr{k}z_g - \piTzr{l}z_g} \nl
		\les \hT^{r+1}\seminorm[\HS{r+2}(T)]{u}\hT\seminorm[H^2(T)]{z_g}, \nn
	\end{align}
	where the first term in the second line has been bounded by \eqref{eq:continuous.trace} and \eqref{eq:piKTe.approx}, and the second by \eqref{eq:piFT.approx}. This concludes the proof.
\end{proof}

\section{Examples of Local Stabilisation Forms}\label{sec:examples.sT}

In this section we introduce and analyse several stabilisation bilinear forms that satisfy Assumption \ref{assum:stability}. This assumption implies in particular the polynomial consistency of \(\sKT\), that is, \(\sKT(\ITkl w,\ITkl w)=0\) whenever \(w\in\POLY{k+1}(T)\) (apply \eqref{eq:stab.consistency} to \(v=w\)). Following the arguments in \cite[Lemma 2.11]{di-pietro.droniou:2020:hybrid}, this means that \(\sKT\) depends only on the difference operators \(\deltaKT{l}:\UTkl\to\POLY{l}(T)\) and \(\deltaKFT{k}:\UTkl\to\POLY{k}(\FT)\) defined by: for all \(\ulvT\in\UTkl\),
\begin{equation}\label{eq:difference.operators.def}
	\deltaKT{l} \ulvT \defeq \piTzr{l}(\pKT{k+1}\ulvT - \vT) \quad\textrm{and}\quad \deltaKFT{k} \ulvT \defeq \piFTzr{k}(\pKT{k+1}\ulvT - \vFT). 
\end{equation}
We will show that the following choices of stabilisation satisfy the design assumptions.
\begin{enumerate}
	\item \emph{Minimally scaled gradient-based stabilisation}
	\begin{align}\label{eq:stab.grad.min}
		\sKTgradmin(\uluT, &\,\ulvT) \defeq
		\ulKT\left[\brac[T]{\nabla\deltaKT{l}\uluT,\nabla\deltaKT{l}\ulvT} 
		+ \hT^{-1}\brac[\bdryT]{\deltaKFT{k}\uluT,\deltaKFT{k}\ulvT}\right].		
	\end{align}
	\item \emph{Maximally scaled gradient-based stabilisation}
	\begin{align}\label{eq:stab.grad.max}
		\sKTgradmax(\uluT, &\,\ulvT) \defeq
		\olKT\left[\brac[T]{\nabla\deltaKT{l}\uluT,\nabla\deltaKT{l}\ulvT} 
		+ \hT^{-1}\brac[\bdryT]{\deltaKFT{k}\uluT,\deltaKFT{k}\ulvT}\right].
	\end{align}
	\item \emph{Boundary stabilisation}
	\begin{equation}\label{eq:stab.bdry}
		\sKTbdry(\uluT, \ulvT) \defeq \hT^{-1}\brac[\matK, \bdryT]{\deltaKFT{k}\uluT - \deltaKT{l}\uluT, \deltaKFT{k}\ulvT - \deltaKT{l}\ulvT}. 
	\end{equation}
\end{enumerate}
The stabilisation \eqref{eq:stab.bdry} is equivalent to the ``original HHO stabilisation'' as described in \cite{di-pietro.droniou:2020:hybrid}, with a change of scaling for face differences from \(\hF^{-1}\) to \(\hT^{-1}\). This change is however critical to ensure that Assumption \ref{assum:stability} is satisfied with hidden constants that do not depend on the number or smallness of the faces in \(T\). The two gradient-based stabilisation \eqref{eq:stab.grad.min} and \eqref{eq:stab.grad.max} are identical except for changes in the diffusion scaling factors. It is clear by the inequality \(\ulKT \le \olKT\) that
\begin{equation}
	\sKTgradmin(\ulvT,\ulvT) \le \sKTgradmax(\ulvT,\ulvT). \label{eq:grad.stab.bound}
\end{equation}
Moreover, if we can prove that \(\sKTgradmin\) and \(\sKTgradmax\) satisfy Assumption \ref{assum:stability}, then it follows that any choice of volumetric and boundary scaling that are bounded below by \(\ulKT\) and above by \(\olKT\) will also result in gradient-based stabilisation forms that satisfy Assumption \ref{assum:stability}.

\begin{remark}[Scaling stabilisations]
	The factors \(\hT^{-1}\) (for boundary terms) and \(1\) (for volumetric terms) in \eqref{eq:stab.grad.min}, \eqref{eq:stab.grad.max}  and \eqref{eq:stab.bdry} may both be scaled by a positive constant, or replaced by an equivalent quantity. For highly distorted meshes it is sometimes sensible to replace the scaling \(\hT^{-1}\) with \(\frac{\seminorm[d-1]{\bdryT}}{\seminorm[d]{T}}\) (c.f. \cite{droniou:2020:interplay}). The optimal choice of scaling is not discussed here and remains a topic for further research.	
\end{remark}

\subsection{Relationship Between Stabilisation Bilinear Forms}
We first establish some relations between the bilinear forms, which will facilitate their analysis.

\begin{proposition}\label{prop:stab.relate}
	The stabilisation bilinear forms defined by \eqref{eq:stab.grad.min} and \eqref{eq:stab.bdry} satisfy the relationship
	\begin{equation}\label{eq:stab.relate}
		\sKTgradmin(\ulvT,\ulvT) \lesssim \sKTbdry(\ulvT, \ulvT) \qquad\forall\ulvT\in\UTkl. 
	\end{equation}
\end{proposition}

This proposition directly results from the following two lemmas.

\begin{lemma}
	It holds that
	\begin{equation}\label{eq:deltaKT.bound}
		\seminorm[\matK,\HONE(T)]{\deltaKT{l} \ulvT} \lesssim \hT^{-\frac{1}{2}} \norm[\matK,\bdryT]{\deltaKFT{k} \ulvT - \deltaKT{l}\ulvT}\qquad\forall \ulvT\in\UTkl. 
	\end{equation}
\end{lemma}

\begin{proof}
	We begin by integrating by parts the left-hand side of the potential reconstruction equation \eqref{eq:pKT.def} with a generic \(w\in\POLY{k+1}(T)\), and by rearranging to yield
	\begin{equation}\label{eq:deltaKT.bound.proof.1}
		\brac[T]{\vT-\pKT{k+1}\ulvT, \nabla \cdot(\matKT \nabla w)} = \brac[\bdryT]{\vFT-\pKT{k+1}\ulvT , \matKT\nabla w \cdot \norT}. 
	\end{equation}
	As \(\nabla\cdot (\matKT \nabla w) \in \POLY{k-1}(T) \subseteq \POLY{l}(T)\) and \(\matKT\nabla w\cdot \norT \in \POLY{k}(\FT)\) we may introduce projections to \eqref{eq:deltaKT.bound.proof.1} as follows,
	\begin{equation}
		\brac[T]{\piTzr{l}(\vT - \pKT{k+1}\ulvT), \nabla\cdot(\matKT\nabla w)} = \brac[\bdryT]{\piFTzr{k}(\vFT - \pKT{k+1}\ulvT), \matKT\nabla w \cdot\norT}, \nn
	\end{equation}
	which gives
	\begin{equation}\label{eq:deltaKT.bound.proof.2}
		\brac[T]{\deltaKT{l}\ulvT, \nabla\cdot(\matKT\nabla w)} = \brac[\bdryT]{\deltaKFT{k}\ulvT, \matKT\nabla w \cdot\norT}. 
	\end{equation}
	Integrating by parts again yields,
	\begin{align}
		-(\matKT \nabla \deltaKT{l} \ul{v}_{T},\nabla w )_T \eq ( \deltaKFT{k}\ulvT-\deltaKT{l}\ulvT, 
		\matKT\nabla w \cdot\norT)_{\bdryT} \nl
		\eq \brac[\bdryT]{\matKT^\frac12\norT(\deltaKFT{k}\ulvT-\deltaKT{l}\ulvT),\matKT^\frac12\nabla w}. \nn
	\end{align}
	Choosing \(w=-\deltaKT{l}\ulvT\in\POLY{k+1}(T)\) we infer that
	\begin{align}
		\seminorm[\matK,\HONE(T)]{\deltaKT{l} \ulvT}^2 \lea \norm[\matK,\bdryT]{\deltaKFT{k}\ulvT-\deltaKT{l}\ulvT} \norm[\bdryT]{\matKT^\frac12\nabla\deltaKT{l} \ulvT} \nl
		\les \hT^{-\frac{1}{2}} \norm[\matK,\bdryT]{\deltaKFT{k}\ulvT-\deltaKT{l}\ulvT} \seminorm[\matK,\HONE(T)]{\deltaKT{l} \ulvT}, \nn
	\end{align}
	where the last line follows from the discrete trace inequality \eqref{eq:discrete.trace}. Simplifying yields the desired result.	
\end{proof}
\begin{lemma}\label{lem:deltaKFT.bound}
	It holds that
	\[
		\ulKT^\frac12\norm[\bdryT]{\deltaKFT{k} \ulvT} \lesssim \norm[\matK,\bdryT]{\deltaKFT{k} \ulvT - \deltaKT{l}\ulvT}\qquad \forall\ulvT\in\UTkl. \label{eq:deltaKFT.bound}
	\]
\end{lemma}

\begin{proof}
	Applying a triangle inequality on \(\ulKT^{\frac12}\norm[\bdryT]{\deltaKFT{k} \ulvT}\) as well as the bound \(\ulKT\le \matKT\norTF\cdot\norTF\) yields
	\begin{align}
		\ulKT^\frac12\norm[\bdryT]{\deltaKFT{k} \ulvT} \lea \norm[\matK,\bdryT]{\deltaKFT{k} \ulvT - \deltaKT{l} \ulvT} + \ulKT^\frac12\norm[\bdryT]{\deltaKT{l} \ulvT} \nl
		\les \norm[\matK,\bdryT]{\deltaKFT{k} \ulvT - \deltaKT{l} \ulvT} + \ulKT^\frac12\hT^{-\frac12}\norm[T]{\deltaKT{l} \ulvT},
	\end{align}
	where the second line is due to the discrete trace inequality \eqref{eq:discrete.trace}. The proof is completed by invoking the Poincar\'{e}-Wirtinger inequality
	\[
		\ulKT^\frac12 \hT^{-\frac{1}{2}}\norm[T]{\deltaKT{l} \ulvT} \lesssim \ulKT^\frac12 \hT^{\frac{1}{2}}\seminorm[\HONE(T)]{\deltaKT{l} \ulvT} \lesssim \hT^{\frac{1}{2}}\seminorm[\matK,\HONE(T)]{\deltaKT{l} \ulvT}
	\]
	and applying \eqref{eq:deltaKT.bound}.
\end{proof}

\subsection{Stabilisation Properties}

We now prove that the gradient-based and boundary stabilisations satisfy the coercivity and consistency properties in Assumption \ref{assum:stability}.

\begin{lemma}[Coercivity of \(\sKTgradmin\)]\label{lem:stab.coercivity}
	The minimally scaled gradient-based stabilisation \(\sKTgradmin\) defined by \eqref{eq:stab.grad.min} satisfies the coercivity condition \eqref{eq:stab.coercivity}.
\end{lemma}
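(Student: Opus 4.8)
The plan is to derive the coercivity bound \eqref{stab.coercivity} directly from the definition \eqref{discretenorm} of $\seminorm[1,\bld{K},\partial T]{{\cdot}}$, by rewriting the boundary jump $v_{\partial T}-v_T$ (viewed on $\partial T$) in terms of the potential reconstruction $p\defeq p_{\bld{K},T}^{k+1}\ul{v}_T$ and the two difference operators \eqref{diffop}. The starting observation is that $v_{\partial T}\in\P^k(\Fh[T])$ and $v_T\in\P^l(T)$ are left invariant by the relevant projectors, so that $\delta_{\bld{K},\partial T}^k\ul{v}_T=\pi_{\partial T}^{0,k}p-v_{\partial T}$ and $\delta_{\bld{K},T}^l\ul{v}_T=\pi_T^{0,l}p-v_T$. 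Substituting these identities yields, on $\partial T$, the exact decomposition
\[
v_{\partial T}-v_T=\big(\pi_{\partial T}^{0,k}p-\pi_T^{0,l}p\big)-\delta_{\bld{K},\partial T}^k\ul{v}_T+\delta_{\bld{K},T}^l\ul{v}_T. \nn
\]
I would then apply a triangle inequality in the weighted boundary norm $\norm[\bld{K},\partial T]{{\cdot}}$ and bound the three resulting contributions to $h_T^{-1}\norm[\bld{K},\partial T]{v_{\partial T}-v_T}^2$ separately.

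The first term is controlled by the weighted boundary-projection estimate \eqref{l2weightedboundary} with $s=1$ (legitimate since $\min(k,l)\ge 0$ whenever $l\ge 0$), giving $h_T^{-1}\norm[\bld{K},\partial T]{\pi_{\partial T}^{0,k}p-\pi_T^{0,l}p}^2\lesssim\alpha_T\seminorm[\bld{K},H^1(T)]{p}^2$. The second term is exactly the boundary part of the stabilisation \eqref{def:stab.grad.min}: the norm equivalence \eqref{eqn:norm.equiv.bdry} bounds $\norm[\bld{K},\partial T]{{\cdot}}^2$ by $\ol{K}_T\norm[\partial T]{{\cdot}}^2$, and factoring out the minimal scaling $\ul{K}_T$ produces $\alpha_T\sTgradmin(\ul{v}_T,\ul{v}_T)$. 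For the third term, I would first note that $\delta_{\bld{K},T}^l\ul{v}_T$ has zero mean on $T$ — this follows from the closure relation \eqref{ptclosure} together with the mean-preserving property of $\pi_T^{0,l}$ — and then chain the discrete trace inequality \eqref{discretetrace}, a Poincar\'e--Wirtinger inequality (valid with a $\varrho$-dependent constant on sets connected by star-shaped sets), and the norm equivalences \eqref{eqn:norm.equiv.bdry} and \eqref{eqn:norm.equiv.hr}. This reduces the boundary contribution to $\ol{K}_T\norm[T]{\nabla\delta_{\bld{K},T}^l\ul{v}_T}^2=\alpha_T\,\ul{K}_T\norm[T]{\nabla\delta_{\bld{K},T}^l\ul{v}_T}^2$, which is $\alpha_T$ times the volumetric part of $\sTgradmin$.

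Collecting the three estimates gives $\seminorm[1,\bld{K},\partial T]{\ul{v}_T}^2\lesssim\alpha_T\big(\seminorm[\bld{K},H^1(T)]{p}^2+\sTgradmin(\ul{v}_T,\ul{v}_T)\big)$, and since $\seminorm[\bld{K},H^1(T)]{p}^2=\seminorm[\bld{K},H^1(T)]{p_{\bld{K},T}^{k+1}\ul{v}_T}^2$ together with $\sTgradmin$ are precisely the two summands of $a_T(\ul{v}_T,\ul{v}_T)$ in \eqref{localform}, the right-hand side equals $\alpha_T a_T(\ul{v}_T,\ul{v}_T)$, as required. I expect the main difficulty to lie not in any single inequality but in the bookkeeping of the diffusion scalings so that exactly \emph{one} factor of $\alpha_T$ (and no higher power) survives: each of the three terms must separately be shown to cost a single $\alpha_T$, which is possible only because the weight $\bld{K}_T\bld{n}_{\partial T}\cdot\bld{n}_{\partial T}$ is squeezed between $\ul{K}_T$ and $\ol{K}_T$ and because \eqref{def:stab.grad.min} carries the ``minimal'' scaling $\ul{K}_T$.
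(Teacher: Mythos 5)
Your proof is correct and follows essentially the same route as the paper's: both reduce $h_T^{-1}\norm[\bld{K},\partial T]{v_{\partial T}-v_T}^2$ to $\alpha_T$ times the three constituents of $a_T(\ul{v}_T,\ul{v}_T)$ using \eqref{l2weightedboundary} with $s=1$, the discrete trace inequality, a Poincar\'e--Wirtinger inequality justified by the zero-mean closure \eqref{ptclosure}, and the norm equivalences. The only difference is bookkeeping: the paper splits through $p_{\bld{K},T}^{k+1}\ul{v}_T$ first and then through its projections (invoking \eqref{l2weightedboundary} with element degree $k+1$), whereas you write the exact three-term identity up front and apply \eqref{l2weightedboundary} once with the scheme's own degrees $(k,l)$ — a marginally tidier organisation of the same estimates.
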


\begin{proof}
	We start by invoking a triangle inequality on \(\seminorm[1,\matK,\bdryT]{\ulvT}^2\) to write
	\begin{equation}\label{eq:stab.coercivity.proof.1}
	\seminorm[1,\matK,\bdryT]{\ulvT}^2 \lesssim \hT^{-1}\norm[\matK,\bdryT]{\vFT - \pKT{k+1}\ulvT}^2 + \hT^{-1}\norm[\matK,\bdryT]{\pKT{k+1}\ulvT - \vT}^2. 
	\end{equation}
	We bound the second term in the right-hand side by applying the norm equivalence \eqref{eq:norm.equiv.bdry} and the discrete trace inequality \eqref{eq:discrete.trace}, followed by a Poincar\'{e}--Wirtinger inequality (noting that \(\pKT{k+1}\ulvT-\vT\) has integral \(0\) due to \eqref{eq:pKT.closure}), to obtain
	\begin{equation}\label{eq:stab.coercivity.proof.2}
		\hT^{-1}\norm[\matK,\bdryT]{\pKT{k+1}\ulvT - \vT}^2 \lesssim \olKT\hT^{-2}\norm[T]{\pKT{k+1}\ulvT - \vT}^2 
		\lesssim \olKT\seminorm[\HONE(T)]{\pKT{k+1}\ulvT - \vT}^2. 
	\end{equation}
	We invoke another triangle inequality and apply the boundedness property \eqref{eq:piTzr.approx} of the \(\LTWO\)-projector as follows,
	\begin{align}
		\olKT\seminorm[\HONE(T)]{\pKT{k+1}\ulvT - \vT}^2 \les \olKT\seminorm[\HONE(T)]{\pKT{k+1}\ulvT - \piTzr{l}\pKT{k+1}\ulvT}^2 
		+ \olKT\seminorm[\HONE(T)]{\piTzr{l}\pKT{k+1}\ulvT - \vT}^2 \nl
		\les \alphaT\seminorm[\matK,\HONE(T)]{\pKT{k+1}\ulvT}^2 + \alphaT\ulKT\seminorm[\HONE(T)]{\deltaKT{l}\ulvT}^2, \nn
	\end{align}
	where we have used the norm equivalence \eqref{eq:norm.equiv.hr} and \(\olKT=\alphaT\ulKT\). The right-hand side is bounded above by \(\alphaT\aKT(\ulvT,\ulvT)\) (with the stabilisation choice \(\sKTgradmin\)), and it only remains to bound the first term in the right-hand side of \eqref{eq:stab.coercivity.proof.1}. By applying a triangle inequality and the norm equivalence \eqref{eq:norm.equiv.bdry}, we infer that
	\begin{align}
		\hT^{-1}\norm[\matK,\bdryT]{\vFT - \pKT{k+1}\ulvT}^2 \les \hT^{-1}\norm[\matK,\bdryT]{\vFT - \piFTzr{k}\pKT{k+1}\ulvT}^2 
		+ \hT^{-1}\norm[\matK,\bdryT]{\piFTzr{k}\pKT{k+1}\ulvT - \pKT{k+1}\ulvT}^2 \nl
		\les \alphaT\left( \frac{\ulKT}{\hT}\norm[\bdryT]{\deltaKFT{k}\ulvT}^2 + \seminorm[\matK,\HONE(T)]{\pKT{k+1}\ulvT}^2 \right), \label{eq:stab.coercivity.proof.3}
	\end{align}
	where we have used \(\olKT=\alphaT\ulKT\) and invoked \eqref{eq:piFT.weighted.approx} with \(l=k+1\), \(s=1\) and \(v=\pKT{k+1}\ulvT\). This concludes the proof, since the bracketed term in the right-hand side is bounded above by \(\aKT(\ulvT,\ulvT)\).
\end{proof}

\begin{corollary}[Coercivity of \(\sKTbdry\) and \(\sKTgradmax\)]
	The boundary stabilisation form \(\sKTbdry\) and maximally scaled gradient-based stabilisation form \(\sKTgradmax\) defined by \eqref{eq:stab.bdry} and \eqref{eq:stab.grad.max}, respectively, satisfy the coercivity condition \eqref{eq:stab.coercivity}.
\end{corollary}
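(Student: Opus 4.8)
The plan is to deduce both coercivity statements from the already-established coercivity of $\sTgradmin$ (Lemma \ref{lem:stab.coercivity}) by a simple monotonicity argument, exploiting that the stabilisation enters $a_T$ additively and positively. The key observation is that, in the definition \eqref{localform}, the consistency term $\brac[T]{\bld{K}_T\nabla p_{\bld{K},T}^{k+1}\ul{v}_T,\nabla p_{\bld{K},T}^{k+1}\ul{v}_T}$ is common to every choice of stabilisation and is nonnegative. Consequently, if two symmetric positive semi-definite stabilisations satisfy $s_T^{(1)}(\ul{v}_T,\ul{v}_T)\lesssim s_T^{(2)}(\ul{v}_T,\ul{v}_T)$ for all $\ul{v}_T$, then the associated bilinear forms obey $a_T^{(1)}(\ul{v}_T,\ul{v}_T)\lesssim a_T^{(2)}(\ul{v}_T,\ul{v}_T)$, so that any coercivity bound \eqref{stab.coercivity} valid for $s_T^{(1)}$ transfers immediately to $s_T^{(2)}$ (with a possibly enlarged hidden constant). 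Throughout, I write $a_T^{\rm min}$, $a_T^{\rm max}$ and $a_T^{\partial}$ for the form \eqref{localform} built respectively from $\sTgradmin$, $\sTgradmax$ and $\sTbdry$.

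For $\sTgradmax$, I would combine the pointwise bound \eqref{eqn:bound.grad.stab}, namely $\sTgradmin(\ul{v}_T,\ul{v}_T)\le\sTgradmax(\ul{v}_T,\ul{v}_T)$, with the shared gradient term to obtain $a_T^{\rm min}(\ul{v}_T,\ul{v}_T)\le a_T^{\rm max}(\ul{v}_T,\ul{v}_T)$. Chaining this with Lemma \ref{lem:stab.coercivity} then gives $\seminorm[1,\bld{K},\partial T]{\ul{v}_T}^2 \lesssim \alpha_T a_T^{\rm min}(\ul{v}_T,\ul{v}_T) \le \alpha_T a_T^{\rm max}(\ul{v}_T,\ul{v}_T)$, which is precisely \eqref{stab.coercivity} for $\sTgradmax$.

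For $\sTbdry$, I would instead invoke Proposition \ref{prop:stab.relate}, which yields $\sTgradmin(\ul{v}_T,\ul{v}_T)\lesssim\sTbdry(\ul{v}_T,\ul{v}_T)$. Adding the common nonnegative gradient term and absorbing the multiplicative constant gives $a_T^{\rm min}(\ul{v}_T,\ul{v}_T)\lesssim a_T^{\partial}(\ul{v}_T,\ul{v}_T)$, and composing once more with Lemma \ref{lem:stab.coercivity} delivers $\seminorm[1,\bld{K},\partial T]{\ul{v}_T}^2 \lesssim \alpha_T a_T^{\rm min}(\ul{v}_T,\ul{v}_T) \lesssim \alpha_T a_T^{\partial}(\ul{v}_T,\ul{v}_T)$, establishing \eqref{stab.coercivity} for $\sTbdry$.

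There is no genuine obstacle here: the substantive work was carried out in Lemma \ref{lem:stab.coercivity} and Proposition \ref{prop:stab.relate}, and this corollary only chains them together through the elementary monotonicity of $a_T$ in its stabilisation argument. The single point requiring minor care is that Proposition \ref{prop:stab.relate} holds only up to a multiplicative constant (a $\lesssim$ rather than an $\le$), so the gradient term must be treated consistently; but since that term is identical across the three forms and enters positively, absorbing the constant into the final $\lesssim$ is harmless.
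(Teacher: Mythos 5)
Your proposal is correct and follows exactly the paper's (one-line) proof: combine the comparisons \eqref{eqn:bound.grad.stab} and \eqref{eqn:stab.relate} with Lemma \ref{lem:stab.coercivity}, using that the stabilisation enters $a_T$ additively alongside the common nonnegative gradient term. You merely spell out the monotonicity step that the paper leaves implicit.
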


\begin{proof}
	Invoke the estimates \eqref{eq:stab.relate} and \eqref{eq:grad.stab.bound}, and apply Lemma \ref{lem:stab.coercivity}.
\end{proof}

\begin{lemma}[Consistency of \(\sKTbdry\)]\label{lem:stab.bdry.consistency}
	The boundary stabilisation form \(\sKTbdry\) defined by \eqref{eq:stab.bdry} satisfies the consistency condition \eqref{eq:stab.consistency}.
\end{lemma}

\begin{proof}
	Recalling that \(\pKT{k+1}\circ\ITkl=\piKTe{k+1}\), and invoking equation \eqref{eq:piFT.weighted.approx} (with \(s=1\)), we may write for \(v\in\HS{r+2}(T)\),
	\[
		\sKTbdry(\ITkl v, \ITkl v) = \hT^{-1}\norm[\matK,\bdryT]{\piFTzr{k}(\piKTe{k+1}v-v) - \piTzr{l}(\piKTe{k+1}v-v)}^2 
		\lesssim \alphaT\seminorm[\matK,\HONE(T)]{\piKTe{k+1}v-v}^2. 
	\]
	The conclusion follows from the approximation properties \eqref{eq:piKTe.approx} of \(\piKTe{k+1}\). 
\end{proof}

\begin{lemma}[Consistency of \(\sKTgradmax\)]\label{lem:stab.grad.consistency}
	The maximally scaled gradient-based stabilisation form \(\sKTgradmax\) defined by \eqref{eq:stab.grad.max} satisfies the consistency condition \eqref{eq:stab.consistency}.
\end{lemma}

\begin{proof}
	By the boundedness \eqref{eq:piTzr.approx} of \(\piTzr{l}\) and \eqref{eq:piFT.bound} of \(\piFTzr{k}\), and a continuous trace inequality \eqref{eq:continuous.trace} we have for all \(v\in \HS{r+2}(T)\),
	\begin{align}
		\sKTgradmax(\ITkl v, \ITkl v) \eq \olKT\seminorm[\HONE(T)]{\piTzr{l}(\piKTe{k+1}v-v)}^2 + \olKT\hT^{-1}\norm[\bdryT]{\piFTzr{k}(\piKTe{k+1}v-v)}^2 \nl 
		\les \olKT\seminorm[\HONE(T)]{\piKTe{k+1}v-v}^2 + \hT^{-1}\olKT\seminorm[\bdryT]{\piKTe{k+1}v-v}^2 \nl
		\les \olKT\seminorm[\HONE(T)]{\piKTe{k+1}v-v}^2+\hT^{-2}\olKT\norm[T]{\piKTe{k+1}v-v}^2. \nn
	\end{align}
	Applying a Poincar\'{e}--Wirtinger inequality followed by the norm equivalence \eqref{eq:norm.equiv.hr} yields
	\[
		\sKTgradmax(\ITkl v, \ITkl v) \lesssim \alphaT\seminorm[\matK,\HONE(T)]{\piKTe{k+1}v-v}^2. 
	\]
	The conclusion follows from \eqref{eq:piKTe.approx}.
\end{proof}

The consistency of \(\sKTgradmin\) follows from either of the previous two lemmas and \eqref{eq:stab.relate} or \eqref{eq:grad.stab.bound}.

\begin{corollary}[Consistency of \(\sKTgradmin\)]
	The minimally scaled gradient-based stabilisation form \(\sKTgradmin\) defined via \eqref{eq:stab.grad.min} satisfies the consistency condition \eqref{eq:stab.consistency}.
\end{corollary}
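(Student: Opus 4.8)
The plan is to deduce the consistency of $\sTgradmin$ directly from the consistency of one of the dominating stabilisation forms already established, so that no fresh computation with the difference operators $\delta_{\bld{K},T}^l$ and $\delta_{\bld{K},\partial T}^k$ is required. The most economical route uses the elementwise comparison \eqref{eqn:bound.grad.stab}. Since $\sTgradmin(\ul{v}_T,\ul{v}_T)\le\sTgradmax(\ul{v}_T,\ul{v}_T)$ holds for every $\ul{v}_T\in\ul{U}_T^{k,l}$, I would specialise to $\ul{v}_T=\ul{I}_T^{k,l}v$ for $v\in H^{r+2}(T)$ and $r\in\{0,\dots,k\}$, obtaining $\sTgradmin(\ul{I}_T^{k,l}v,\ul{I}_T^{k,l}v)\le\sTgradmax(\ul{I}_T^{k,l}v,\ul{I}_T^{k,l}v)$. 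The right-hand side is then bounded by $\locerrorRHS{v}$ thanks to Lemma \ref{lem:stab.grad.loc.consistency}, which is exactly the consistency estimate \eqref{stab.consistency} for $\sTgradmin$.

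An alternative route, equally short, relies on Proposition \ref{prop:stab.relate}. Its inequality \eqref{eqn:stab.relate} reads $\sTgradmin(\ul{v}_T,\ul{v}_T)\lesssim\sTbdry(\ul{v}_T,\ul{v}_T)$ for all $\ul{v}_T\in\ul{U}_T^{k,l}$, so taking $\ul{v}_T=\ul{I}_T^{k,l}v$ and invoking the consistency of the boundary stabilisation (Lemma \ref{lem:stab.consistency}) yields the same conclusion. Either comparison suffices, and one may simply quote whichever dominating form is most convenient.

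There is no genuine obstacle here: the corollary is a formal consequence of a comparison inequality combined with an already-proven consistency estimate. The only point I would verify is that the comparison bounds \eqref{eqn:bound.grad.stab} and \eqref{eqn:stab.relate} were both stated for arbitrary $\ul{v}_T\in\ul{U}_T^{k,l}$, so that they apply verbatim to the interpolant $\ul{I}_T^{k,l}v$; this is the case, and hence the proof reduces to a single chain of two inequalities.
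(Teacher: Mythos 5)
Your argument is exactly the paper's: the authors note immediately before the corollary that the consistency of $\sTgradmin$ follows from either of the two preceding consistency lemmas combined with \eqref{eqn:stab.relate} or \eqref{eqn:bound.grad.stab}, which is precisely the pair of routes you describe. The proposal is correct and takes essentially the same approach.
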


\subsection{The Case $l=k-1$}

We show in this section that for the special case \(l=k-1\), and with identity diffusion tensor \(\matK=\mat{I}\), the volumetric term of the gradient-based stabilisations need not be considered. This is inline with results found in VEM \cite{beirao-da-veiga.lovadini:2017:stability, brenner.sung:2018:virtual}. We define the alternative boundary stabilisation \(\sKTkminus:\UTkl\times\UTkl\to\R\) such that, for all \(\uluT,\ulvT\in\UTkl\),
\begin{equation}\label{eq:stab.kminus1}
	\sKTkminus(\uluT,\ulvT) \defeq \hT^{-1}\brac[\bdryT]{\deltaKFT{k}\uluT,\deltaKFT{k}\ulvT}. 
\end{equation}
It is clear that \(\sKTkminus(\ulvT,\ulvT) \le \sKTgradmax(\ulvT,\ulvT)\),
which, with Lemma \ref{lem:stab.grad.consistency}, implies the consistency of \(\sKTkminus\). Therefore, we only have to prove its coercivity.

\begin{lemma}[Coercivity of \(\sKTkminus\)]
	The boundary stabilisation form \(\sKTkminus\) defined via \eqref{eq:stab.kminus1} satisfies the coercivity condition \eqref{eq:stab.coercivity}.
\end{lemma}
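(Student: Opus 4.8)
The plan is to follow the coercivity proof of Lemma~\ref{lem:stab.coercivity} almost verbatim, isolate the single point at which the volumetric part of the stabilisation was previously used, and then discharge that term using the special algebraic feature of the case $l=k-1$. Since $\bld{K}=\bld{I}$ we have $\ul{K}_T=\ol{K}_T=\alpha_T=1$ and every weighted norm collapses to the plain $L^2$ norm, so $\nabla\cdot(\bld{K}_T\nabla w)=\Delta w$. The chain \eqref{proof.coercivity.1}--\eqref{proof.coercivity.3} never uses the specific shape of $s_T$ (only the norm equivalences, the discrete trace inequality \eqref{discretetrace} and a Poincar\'e--Wirtinger inequality), so reproducing it gives
\[
	\seminorm[1,\bld{K},\partial T]{\ul{v}_T}^2 \lesssim \seminorm[H^1(T)]{p_{\bld{K},T}^{k+1}\ul{v}_T}^2 + \frac{1}{h_T}\norm[\partial T]{\delta_{\bld{K},\partial T}^k\ul{v}_T}^2 + \seminorm[H^1(T)]{\delta_{\bld{K},T}^l\ul{v}_T}^2. \nn
\]
The first two terms add up precisely to $a_T(\ul{v}_T,\ul{v}_T)$ with the choice $s_T=\sTkminus$, so the whole lemma reduces to bounding the volumetric difference operator by the boundary one:
\[
	\seminorm[H^1(T)]{\delta_{\bld{K},T}^l\ul{v}_T}^2 \lesssim \frac{1}{h_T}\norm[\partial T]{\delta_{\bld{K},\partial T}^k\ul{v}_T}^2. \label{plan:key}
\]

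For \eqref{plan:key} I would use that $l=k-1$ forces $q\defeq\delta_{\bld{K},T}^l\ul{v}_T\in\P^{k-1}(T)$ to lie in the range of the Laplacian $\Delta\colon\P^{k+1}(T)\to\P^{k-1}(T)$, which is surjective; this is exactly where the restriction $l=k-1$ is essential (for $l=k$ the target space $\P^{k}$ would no longer be reached, and the volumetric term could not be dropped). Picking $w\in\P^{k+1}(T)$ with $\Delta w=q$ and testing the identity \eqref{proof:bound.diffop.ele.2} against this $w$ (with $\bld{K}_T=\bld{I}$) gives
\[
	\norm[T]{q}^2 = \brac[T]{q,\Delta w} = \brac[\partial T]{\delta_{\bld{K},\partial T}^k\ul{v}_T,\nabla w\cdot\bld{n}_{\partial T}} \le \norm[\partial T]{\delta_{\bld{K},\partial T}^k\ul{v}_T}\,\norm[\partial T]{\nabla w}. \nn
\]
Bounding $\norm[\partial T]{\nabla w}\lesssim h_T^{-\frac12}\norm[T]{\nabla w}$ by the discrete trace inequality \eqref{discretetrace} applied to $\nabla w\in\P^k(T)^d$, and then invoking a stability estimate $\norm[T]{\nabla w}\lesssim h_T\norm[T]{q}$ for the polynomial Laplace solve, I would obtain $\norm[T]{q}\lesssim h_T^{\frac12}\norm[\partial T]{\delta_{\bld{K},\partial T}^k\ul{v}_T}$; the inverse Sobolev inequality \eqref{inversesobolev} then upgrades this to $\seminorm[H^1(T)]{q}\lesssim h_T^{-1}\norm[T]{q}\lesssim h_T^{-\frac12}\norm[\partial T]{\delta_{\bld{K},\partial T}^k\ul{v}_T}$, which is \eqref{plan:key}.

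The main obstacle is the uniform stability of this polynomial Laplace solve: one must produce, for every $q\in\P^{k-1}(T)$, a $w\in\P^{k+1}(T)$ with $\Delta w=q$ and $\norm[T]{\nabla w}\lesssim h_T\norm[T]{q}$, with a hidden constant depending only on $\varrho$, $k$ and $d$. I would fix once and for all an algebraic right inverse of $\Delta\colon\P^{k+1}\to\P^{k-1}$ (a single linear map between finite-dimensional spaces, independent of $T$), use it on the unit-diameter configuration $\widehat{T}=h_T^{-1}(T-\bld{x}_T)$, and transport the bound back to $T$ by the standard affine scaling (which reinstates exactly the factor $h_T$). The only thing left to check is that the $L^2$ and $H^1$ norms on $\P^{k-1}(\widehat{T})$ and $\P^{k+1}(\widehat{T})$ are equivalent with constants independent of the particular $\widehat T$; this follows from Assumption~\ref{assum:star.shaped} (inradius $\approx$ diameter), since $\widehat{T}$ contains a ball of radius $\gtrsim\varrho$ and sits inside a ball of radius $\lesssim 1$, and on such a ``sandwiched'' domain all norms on a fixed finite-dimensional polynomial space are comparable. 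This finite-dimensional, shape-uniform Laplace stability is precisely what replaces the now-absent volumetric stabilisation.
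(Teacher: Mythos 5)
Your proposal is correct and follows essentially the same route as the paper: reduce to bounding the volumetric difference operator by the boundary one, then exploit the surjectivity of $\Delta:\P^{k+1}(T)\to\P^{k-1}(T)$ together with the identity \eqref{proof:bound.diffop.ele.2}, a discrete trace inequality, and the scaling bound $\norm[T]{\nabla w}\lesssim h_T\norm[T]{\Delta w}$. The only (welcome) difference is that you spell out the shape-uniform stability of the polynomial Laplace right-inverse, which the paper dispatches as ``a simple scaling argument''.
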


\begin{proof}
	Following the same procedure as in the proof of Lemma \ref{lem:stab.coercivity}, we may apply a triangle inequality to \(\seminorm[1,\matK, \bdryT]{\ulvT}\) and bound the first term of \eqref{eq:stab.coercivity.proof.1} in the same manner as \eqref{eq:stab.coercivity.proof.3}. Thus, it remains to be shown that
	\[
		\hT^{-1}\norm[\bdryT]{\pKT{k+1}\ulvT-\vT}^2 \lesssim \aKT(\ulvT,\ulvT)
	\]
	with the stabilisation choice \(\sKTkminus\). We begin with equation \eqref{eq:stab.coercivity.proof.2} with \(\matK=\mat{I}\) and a triangle inequality to yield
	\[
		\hT^{-1}\norm[\bdryT]{\pKT{k+1}\ulvT-\vT}^2  \lesssim \hT^{-2}\norm[T]{\pKT{k+1}\ulvT-\piTzr{k-1}\pKT{k+1}\ulvT}^2 + \hT^{-2}\norm[T]{\deltaKT{k-1}\ulvT}^2. 
	\]
	By invoking the approximation properties of the \(\LTWO\) projector \eqref{eq:piTzr.approx} we conclude that
	\begin{equation}\label{eq:stab.kminus1.coercivity.proof.1}
		\hT^{-1}\norm[\bdryT]{\pKT{k+1}\ulvT-\vT}^2 \lesssim \seminorm[\HONE(T)]{\pKT{k+1}\ulvT}^2 +  \hT^{-2}\norm[T]{\deltaKT{k-1}\ulvT}^2. 
	\end{equation}
	Consider equation \eqref{eq:deltaKT.bound.proof.2} with \(l=k-1\) and \(\matK=\mat{I}\),
	\begin{equation}\label{eq:stab.kminus1.coercivity.proof.2}
		\brac[T]{\deltaKT{k-1}\ulvT,  \Delta w} = \brac[\bdryT]{\deltaKFT{k}\ulvT , \nabla w \cdot \norT} \lesssim \hT^{-\frac12}\norm[\bdryT]{\deltaKFT{k}\ulvT}\norm[T]{\nabla w}, 
	\end{equation}
	where the inequality follows from a Cauchy--Schwarz inequality and a discrete trace inequality \eqref{eq:discrete.trace}. The operator \(\Delta:\POLY{k+1}(T)\to\POLY{k-1}(T) \) is onto so we may choose \(w\in\POLY{k+1}(T)\) such that \(\Delta w = \deltaKT{k-1}\ulvT\) and \(\norm[T]{\nabla w} \lesssim \hT \norm[T]{\Delta w}\) (the factor \(\hT\) follows from a simple scaling argument). Substituting into \eqref{eq:stab.kminus1.coercivity.proof.2} and rearranging yields
	\begin{equation}\label{eq:stab.kminus1.coercivity.proof.3}
		\norm[T]{\deltaKT{k-1}\ulvT}^2 \lesssim \hT\norm[\bdryT]{\deltaKFT{k}\ulvT}^2. 
	\end{equation}
	Combining equations \eqref{eq:stab.kminus1.coercivity.proof.1} and \eqref{eq:stab.kminus1.coercivity.proof.3} yields the desired result.
\end{proof}

\begin{remark}[Generic diffusion]
	It is possible to define a boundary stabilisation term equivalent to \eqref{eq:stab.kminus1} with a generic diffusion tensor \(\matK\) and appropriate scaling. However, the coercivity constant may have a greater than desired dependence on the diffusion anisotropy \(\alphaT\).
\end{remark}

\begin{remark}[The case \((k,l)=(0,0)\)]
	When \(l=0\), the element difference operator is identically \(0\), i.e. \(\deltaKT{0}\ulvT\equiv0\) for all \(\ulvT\in\UTkl\). Thus, for the case where the cell unknowns are constants, all stabilisation forms are defined entirely in terms of the boundary difference operator. Therefore, the alternative boundary stabilisation \(\sKTkminus\) is still valid for \(l=k=0\), and is indeed equal to the boundary stabilisation \(\sKTbdry\) defined by \eqref{eq:stab.bdry}.
\end{remark}

\section{Numerical Results}

We provide here a variety of numerical tests for the scheme \eqref{eq:discrete.problem} on meshes with small faces. The method is implemented using the \texttt{HArDCore} open source C++ library \cite{hhocode}. We solve the linear system using the BiCGSTAB solver found in the \texttt{Eigen} library, with documentation available at \url{https://eigen.tuxfamily.org/dox/index.html}. All numerical tests are conducted on coarse meshes generated by the agglomeration of triangular and rectangular (in 2D) and cubic (in 3D) meshes. In order to produce these meshes we have written a C++ mesh agglomeration package in the \texttt{HArDCore} library.

The accuracy of each scheme is measured by the following relative energy errors,
\[
	E_{\aKh} \defeq \frac{\norm[\aKh]{\uluh - \Ihkl u}}{\norm[\aKh]{\Ihkl u}} \quad\textrm{and}\quad E_{\matK,1} \defeq \frac{\seminorm[\matK,\HONE(\mathcal{T}_h)]{u - \pKh{k+1}u}}{\seminorm[\matK,\HONE(\mathcal{T}_h)]{u}},
\]
and measure of the jumps, 
\[
	E_{\matK,J} \defeq \left(\sum_{T\in\Th} \frac{\ulKT}{\hT} \norm[\bdryT]{\jump[\bdryT]{\pKh{k+1}\uluh}}^2\right)^{\frac12}. 
\]
We wish to show that these measures of error converge optimally as the face diameter \(\hF\) gets arbitrarily small compared to the cell diameter \(\hT\). To quantify this relative smallness we define the regularity parameter \(\gamma_h\) to be the average of the ratio \(\frac{\hT}{\hF}\),
\[
	\gamma_h\defeq\frac{1}{\CARD{\Th}}\sum_{T\in\Th}\frac{1}{\CARD{\FT}}\sum_{F\in\FT}\frac{\hT}{\hF}. 
\]

\subsection{Tests in Two Dimensions}

We conduct all two-dimensional tests in the unit square \(\Omega=(0,1)^2\) with exact solution \(u\) given by
\[
	u(x,y) = \sin(\pi x)\sin(\pi y). 
\]
The accuracy of the scheme is illustrated by testing with various meshes, stabilisations, diffusion tensors, and approximation orders.

\subsubsection{Test A.}

We consider here an identity diffusion tensor \(\matK=\mat{I}\), and a mesh sequence \(\Mh\) such that \(\gamma_h\to \infty\) as \(h\to 0\). The parameters of the mesh sequence are given in Table \ref{table:meshA} and two of the meshes are plotted in Figure \ref{fig:meshA}. As both gradient-based stabilisations defined by \eqref{eq:stab.grad.max} and \eqref{eq:stab.grad.min} are equal for identity diffusion we denote here their shared value by \(\sKTgrad\). We also consider the following stabilisation term analogous to \cite[Example 2.8]{di-pietro.droniou:2020:hybrid},
\[
	\sKT'(\uluT, \ulvT) \defeq \hT^{-2}\brac[T]{\deltaKT{l} \uluT, \deltaKT{l} \ulvT} + \hT^{-1}\brac[\bdryT]{\deltaKFT{k} \uluT, \deltaKFT{k} \ulvT}. 
\]

\begin{figure}[H]
	\centering
	\includegraphics[width=4\textwidth/9]{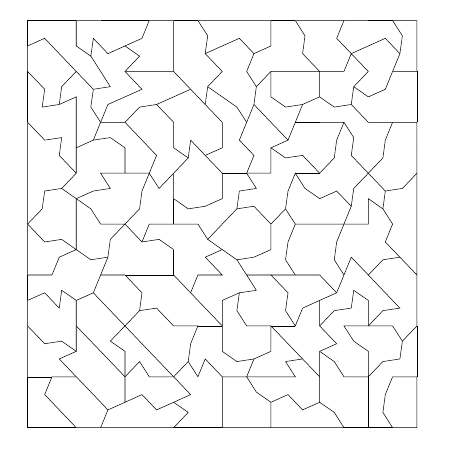} 
	\includegraphics[width=4\textwidth/9]{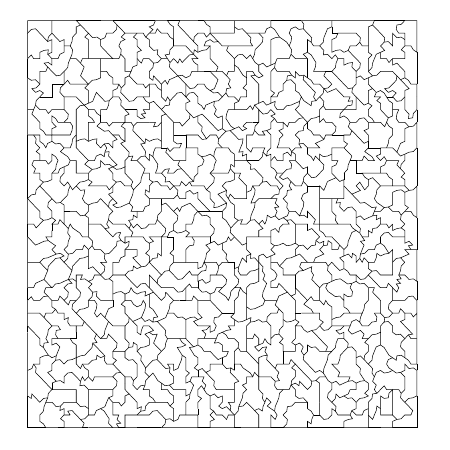} 
	\caption{Two members of the mesh sequence used in Tests A and B.}
	\label{fig:meshA}
\end{figure}

\begin{table}[H]
	\centering
	\pgfplotstableread{data/mesh1_sTori_id_k1.dat}\loadedtable
	\pgfplotstabletypeset
	[
	columns={MeshSize,hThF,NbCells,NbInternalEdges,FacesPerCell}, 
	columns/MeshSize/.style={column name=\(h\)},
	columns/hThF/.style={column name=\(\gamma_h\)},
	columns/NbCells/.style={column name=Nb. Elements},
	columns/NbInternalEdges/.style={column name=Nb. Internal Edges},
	columns/FacesPerCell/.style={column name=Avg. \(\textrm{Card}(\FT)\)},
	every head row/.style={before row=\toprule,after row=\midrule},
	every last row/.style={after row=\bottomrule} 
	]\loadedtable
	\caption{Parameters of mesh sequence used in Tests A and B.}
	\label{table:meshA}
\end{table}

The graphs of the induced errors versus the mesh diameter \(h\) are plotted on a log-log scale in Figure \ref{fig:testA} comparing the various choices of stabilisation term and polynomial degree \(k\). The asymptotic convergence rates match those predicted by the theory in Section \ref{sec:error.estimates}. In particular, these results show that the accuracy of the HHO scheme is independent, as \(h\to 0\), of the increasing number of faces or their relative smallness. Here and throughout we consider \(l=k\) except when using the stabilisation term \(\sKTkminus\) which requires the reduced polynomial degree \(l=k-1\) on element unknowns. It is worth noting that despite this reduction of order, \(\sKTkminus\) still performs well compared to the alternate choices of stabilisation term. Indeed, the varying choices of stability produce similar \(\HONE\) and jump errors, whereas the error in the energy norm is notably worse for the choice of stabilisation \(\sKT'\). In light of this we define \(E_{0,\Th}\) and \(E_{0,\Fh}\) to measure the \(\LTWO\) errors induced from the cell and face unknowns respectively,
\[
	E_{0,\Th} \defeq \left(\frac{\sum_{T\in\Th}\norm[T]{\piTzr{l}(\uT-u)}^2}{\sum_{T\in\Th}\norm[T]{\piTzr{l}u}^2}\right)^{\frac12}\ ;\quad E_{0,\Fh} \defeq \left(\frac{\sum_{F\in\Fh}\hF\norm[F]{\piFzr{k}(\uF-u)}^2}{\sum_{F\in\Fh}\hF\norm[F]{\piFzr{k}u}^2}\right)^{\frac12}.
\]
In Figure \ref{fig:testA2} we compare the errors induced by the choices of stabilisation term \(\sKT'\) and \(\sKTgrad\) with respect to the measures \(E_{0,\Th}\) and \(E_{0,\Fh}\).  The errors induced by the face unknowns are identical, whereas \(\sKTgrad\) performs far better with respect to the measure \(E_{0,\Th}\). This suggests the choice of volumetric term in \(\sKT'\) is poor.
\begin{figure}[H]
	\centering
	\ref{legend:testA}
	\vspace{0.5cm}\\
	\subcaptionbox{\(E_{\aKh}\) vs. \(h\), \(k=1\)}
	{
		\begin{tikzpicture}[scale=0.57]
		\begin{loglogaxis}[ legend columns=4, legend to name=legend:testA ]
		\addplot table[x=MeshSize,y=EnergyError] {data/mesh1_sTori_id_k1.dat};
		\addplot table[x=MeshSize,y=EnergyError] {data/mesh1_sTgrad_id_k1.dat};
		\addplot table[x=MeshSize,y=EnergyError] {data/mesh1_sTbdry_id_k1.dat};
		\addplot table[x=MeshSize,y=EnergyError] {data/mesh1_sTvol_id_k1.dat};
		\logLogSlopeTriangle{0.90}{0.4}{0.1}{2}{black};
		\legend{\(\sKTbdry\), \(\sKTgrad\), \(\sKTkminus\), \(\sKT'\)};
		\end{loglogaxis}
		\end{tikzpicture}
	}
	\subcaptionbox{\(E_{\matK,1}\) vs. \(h\), \(k=1\)}
	{
		\begin{tikzpicture}[scale=0.57]
		\begin{loglogaxis}
		\addplot table[x=MeshSize,y=H1Error] {data/mesh1_sTori_id_k1.dat};
		\addplot table[x=MeshSize,y=H1Error] {data/mesh1_sTgrad_id_k1.dat};
		\addplot table[x=MeshSize,y=H1Error] {data/mesh1_sTbdry_id_k1.dat};
		\addplot table[x=MeshSize,y=H1Error] {data/mesh1_sTvol_id_k1.dat};
		\logLogSlopeTriangle{0.90}{0.4}{0.1}{2}{black};
		\end{loglogaxis}
		\end{tikzpicture}
	}
	\subcaptionbox{\(E_{J,h}\) vs. \(h\), \(k=1\)}
	{
		\begin{tikzpicture}[scale=0.57]
		\begin{loglogaxis}
		\addplot table[x=MeshSize,y=JumpError] {data/mesh1_sTori_id_k1.dat};
		\addplot table[x=MeshSize,y=JumpError] {data/mesh1_sTgrad_id_k1.dat};
		\addplot table[x=MeshSize,y=JumpError] {data/mesh1_sTbdry_id_k1.dat};
		\addplot table[x=MeshSize,y=JumpError] {data/mesh1_sTvol_id_k1.dat};
		\logLogSlopeTriangle{0.90}{0.4}{0.1}{2}{black};
		\end{loglogaxis}
		\end{tikzpicture}
	}
	\subcaptionbox{\(E_{\aKh}\) vs. \(h\), \(k=3\)}
	{
		\begin{tikzpicture}[scale=0.57]
		\begin{loglogaxis}
		\addplot table[x=MeshSize,y=EnergyError] {data/mesh1_sTori_id_k3.dat};
		\addplot table[x=MeshSize,y=EnergyError] {data/mesh1_sTgrad_id_k3.dat};
		\addplot table[x=MeshSize,y=EnergyError] {data/mesh1_sTbdry_id_k3.dat};
		\addplot table[x=MeshSize,y=EnergyError] {data/mesh1_sTvol_id_k3.dat};
		\logLogSlopeTriangle{0.90}{0.4}{0.1}{4}{black};
		\end{loglogaxis}
		\end{tikzpicture}
	}
	\subcaptionbox{\(E_{\matK,1}\) vs. \(h\), \(k=3\)}
	{
		\begin{tikzpicture}[scale=0.57]
		\begin{loglogaxis}
		\addplot table[x=MeshSize,y=H1Error] {data/mesh1_sTori_id_k3.dat};
		\addplot table[x=MeshSize,y=H1Error] {data/mesh1_sTgrad_id_k3.dat};
		\addplot table[x=MeshSize,y=H1Error] {data/mesh1_sTbdry_id_k3.dat};
		\addplot table[x=MeshSize,y=H1Error] {data/mesh1_sTvol_id_k3.dat};
		\logLogSlopeTriangle{0.90}{0.4}{0.1}{4}{black};
		\end{loglogaxis}
		\end{tikzpicture}
	}
	\subcaptionbox{\(E_{J,h}\) vs. \(h\), \(k=3\)}
	{
		\begin{tikzpicture}[scale=0.57]
		\begin{loglogaxis}
		\addplot table[x=MeshSize,y=JumpError] {data/mesh1_sTori_id_k3.dat};
		\addplot table[x=MeshSize,y=JumpError] {data/mesh1_sTgrad_id_k3.dat};
		\addplot table[x=MeshSize,y=JumpError] {data/mesh1_sTbdry_id_k3.dat};
		\addplot table[x=MeshSize,y=JumpError] {data/mesh1_sTvol_id_k3.dat};
		\logLogSlopeTriangle{0.90}{0.4}{0.1}{4}{black};
		\end{loglogaxis}
		\end{tikzpicture}
	}
	\caption{Test A.}
	\label{fig:testA}
\end{figure}

\begin{figure}[H]
	\centering
	\ref{legend:testA2}
	\vspace{0.5cm}\\
	\subcaptionbox{\(E_{0,\Th}\) vs \(h\), \(k=1\)}
	{
		\begin{tikzpicture}[scale=0.57]
		\begin{loglogaxis}[ legend columns=2, legend to name=legend:testA2 ]
		\addplot table[x=MeshSize,y=CellL2] {data/voltest_vol_ht_k1.dat};
		\addplot table[x=MeshSize,y=CellL2] {data/voltest_grad_ht_k1.dat};
		\logLogSlopeTriangle{0.90}{0.4}{0.1}{3}{black};
		\legend{\(\sKT'\), \(\sKTgrad\)};
		\end{loglogaxis}
		\end{tikzpicture}
	}
	\subcaptionbox{\(E_{0,\Fh}\) vs \(h\), \(k=1\)}
	{
		\begin{tikzpicture}[scale=0.57]
		\begin{loglogaxis}
		\addplot table[x=MeshSize,y=FaceL2] {data/voltest_vol_ht_k1.dat};
		\addplot table[x=MeshSize,y=FaceL2] {data/voltest_grad_ht_k1.dat};
		\logLogSlopeTriangle{0.90}{0.4}{0.1}{3}{black};
		\end{loglogaxis}
		\end{tikzpicture}
	}
	\subcaptionbox{\(E_{0,\Th}\) vs \(h\), \(k=3\)}
	{
		\begin{tikzpicture}[scale=0.57]
		\begin{loglogaxis}
		\addplot table[x=MeshSize,y=CellL2] {data/voltest_vol_ht_k3.dat};
		\addplot table[x=MeshSize,y=CellL2] {data/voltest_grad_ht_k3.dat};
		\logLogSlopeTriangle{0.90}{0.4}{0.1}{5}{black};
		\end{loglogaxis}
		\end{tikzpicture}
	}
	\caption{\(\LTWO\) error vs \(h\)}
	\label{fig:testA2}
\end{figure}	

\subsubsection{Test B.}
This test is designed to verify convergence of the scheme with an anisotropic diffusion tensor
\[
	\matK = \begin{pmatrix} \lambda & 0 \\ 0 & \lambda^{-1} \end{pmatrix} \nn
\]
with \(\lambda = 10^2\). The mesh sequence considered for these tests are the same as those for Test A. We compare in Figure \ref{fig:testB} the two gradient-based defined by \eqref{eq:stab.grad.max} and \eqref{eq:stab.grad.min} as well as an alternate gradient-based stabilisation defined by 
\[
	\sKT^{\nabla,\matK}(\uluT, \ulvT) \defeq \brac[T]{\matKT\nabla\deltaKT{l} \uluT, \nabla\deltaKT{l} \ulvT} + \hT^{-1}\brac[\matK, \bdryT]{\deltaKFT{k} \uluT, \deltaKFT{k} \ulvT}. \nn
\]

\begin{figure}[H]
	\centering
	\ref{legendb}
	\vspace{0.5cm}\\
	\subcaptionbox{\(E_{\aKh}\) vs. \(h\), \(k=0\)}
	{
		\begin{tikzpicture}[scale=0.56]
		\begin{loglogaxis}
		\addplot table[x=MeshSize,y=EnergyError] {data/diffusion_gradmin_k0.dat};
		\addplot table[x=MeshSize,y=EnergyError] {data/diffusion_gradmax_k0.dat};
		\addplot table[x=MeshSize,y=EnergyError] {data/diffusion_gradktf_k0.dat};
		\logLogSlopeTriangle{0.90}{0.4}{0.1}{1}{black};
		\end{loglogaxis}
		\end{tikzpicture}
	}
	\subcaptionbox{\(E_{\matK,1}\) vs. \(h\), \(k=0\)}
	{
		\begin{tikzpicture}[scale=0.56]
		\begin{loglogaxis}
		\addplot table[x=MeshSize,y=H1Error] {data/diffusion_gradmin_k0.dat};
		\addplot table[x=MeshSize,y=H1Error] {data/diffusion_gradmax_k0.dat};
		\addplot table[x=MeshSize,y=H1Error] {data/diffusion_gradktf_k0.dat};
		\logLogSlopeTriangle{0.90}{0.4}{0.1}{1}{black};
		\end{loglogaxis}
		\end{tikzpicture}
	}
	\subcaptionbox{\(E_{J,h}\) vs. \(h\), \(k=0\)}
	{
		\begin{tikzpicture}[scale=0.56]
		\begin{loglogaxis}
		\addplot table[x=MeshSize,y=JumpError] {data/diffusion_gradmin_k0.dat};
		\addplot table[x=MeshSize,y=JumpError] {data/diffusion_gradmax_k0.dat};
		\addplot table[x=MeshSize,y=JumpError] {data/diffusion_gradktf_k0.dat};
		\logLogSlopeTriangle{0.90}{0.4}{0.1}{1}{black};
		\end{loglogaxis}
		\end{tikzpicture}
	}
	\subcaptionbox{\(E_{\aKh}\) vs. \(h\), \(k=1\)}
	{
		\begin{tikzpicture}[scale=0.56]
		\begin{loglogaxis}
		\addplot table[x=MeshSize,y=EnergyError] {data/diffusion_gradmin_k1.dat};
		\addplot table[x=MeshSize,y=EnergyError] {data/diffusion_gradmax_k1.dat};
		\addplot table[x=MeshSize,y=EnergyError] {data/diffusion_gradktf_k1.dat};
		\addplot table[x=MeshSize,y=EnergyError] {data/diffusion_ori_k1.dat};
		\logLogSlopeTriangle{0.90}{0.4}{0.1}{2}{black};
		\end{loglogaxis}
		\end{tikzpicture}
	}
	\subcaptionbox{\(E_{\matK,1}\) vs. \(h\), \(k=1\)}
	{
		\begin{tikzpicture}[scale=0.56]
		\begin{loglogaxis}
		\addplot table[x=MeshSize,y=H1Error] {data/diffusion_gradmin_k1.dat};
		\addplot table[x=MeshSize,y=H1Error] {data/diffusion_gradmax_k1.dat};
		\addplot table[x=MeshSize,y=H1Error] {data/diffusion_gradktf_k1.dat};
		\addplot table[x=MeshSize,y=H1Error] {data/diffusion_ori_k1.dat};
		\logLogSlopeTriangle{0.90}{0.4}{0.1}{2}{black};
		\end{loglogaxis}
		\end{tikzpicture}
	}
	\subcaptionbox{\(E_{J,h}\) vs. \(h\), \(k=1\)}
	{
		\begin{tikzpicture}[scale=0.56]
		\begin{loglogaxis}
		\addplot table[x=MeshSize,y=JumpError] {data/diffusion_gradmin_k1.dat};
		\addplot table[x=MeshSize,y=JumpError] {data/diffusion_gradmax_k1.dat};
		\addplot table[x=MeshSize,y=JumpError] {data/diffusion_gradktf_k1.dat};
		\addplot table[x=MeshSize,y=JumpError] {data/diffusion_ori_k1.dat};
		\logLogSlopeTriangle{0.90}{0.4}{0.1}{2}{black};
		\end{loglogaxis}
		\end{tikzpicture}
	}
	\subcaptionbox{\(E_{\aKh}\) vs. \(h\), \(k=2\)}
	{
		\begin{tikzpicture}[scale=0.56]
		\begin{loglogaxis}[ legend columns=4, legend to name=legendb ]
		\addplot table[x=MeshSize,y=EnergyError] {data/diffusion_gradmin_k2.dat};
		\addlegendentry{\(\sKTgradmin\)}
		\addplot table[x=MeshSize,y=EnergyError] {data/diffusion_gradmax_k2.dat};
		\addlegendentry{\(\sKTgradmax\)}
		\addplot table[x=MeshSize,y=EnergyError] {data/diffusion_gradktf_k2.dat};
		\addlegendentry{\(\sKT^{\nabla,\matK}\)}
		\addplot table[x=MeshSize,y=EnergyError] {data/diffusion_ori_k2.dat};
		\addlegendentry{\(\sKTbdry\)}
		\logLogSlopeTriangle{0.90}{0.4}{0.1}{3}{black};
		\end{loglogaxis}
		\end{tikzpicture}
	}
	\subcaptionbox{\(E_{\matK,1}\) vs. \(h\), \(k=2\)}
	{
		\begin{tikzpicture}[scale=0.56]
		\begin{loglogaxis}
		\addplot table[x=MeshSize,y=H1Error] {data/diffusion_gradmin_k2.dat};
		\addplot table[x=MeshSize,y=H1Error] {data/diffusion_gradmax_k2.dat};
		\addplot table[x=MeshSize,y=H1Error] {data/diffusion_gradktf_k2.dat};
		\addplot table[x=MeshSize,y=H1Error] {data/diffusion_ori_k2.dat};
		\logLogSlopeTriangle{0.90}{0.4}{0.1}{3}{black};
		\end{loglogaxis}
		\end{tikzpicture}
	}
	\subcaptionbox{\(E_{J,h}\) vs. \(h\), \(k=2\)}
	{
		\begin{tikzpicture}[scale=0.56]
		\begin{loglogaxis}
		\addplot table[x=MeshSize,y=JumpError] {data/diffusion_gradmin_k2.dat};
		\addplot table[x=MeshSize,y=JumpError] {data/diffusion_gradmax_k2.dat};
		\addplot table[x=MeshSize,y=JumpError] {data/diffusion_gradktf_k2.dat};
		\addplot table[x=MeshSize,y=JumpError] {data/diffusion_ori_k2.dat};
		\logLogSlopeTriangle{0.90}{0.4}{0.1}{3}{black};
		\end{loglogaxis}
		\end{tikzpicture}
	}
	\subcaptionbox{\(E_{\aKh}\) vs. \(h\), \(k=3\)}
	{
		\begin{tikzpicture}[scale=0.56]
		\begin{loglogaxis}
		\addplot table[x=MeshSize,y=EnergyError] {data/diffusion_gradmin_k3.dat};
		\addplot table[x=MeshSize,y=EnergyError] {data/diffusion_gradmax_k3.dat};
		\addplot table[x=MeshSize,y=EnergyError] {data/diffusion_gradktf_k3.dat};
		\addplot table[x=MeshSize,y=EnergyError] {data/diffusion_ori_k3.dat};
		\logLogSlopeTriangle{0.90}{0.4}{0.1}{4}{black};
		\end{loglogaxis}
		\end{tikzpicture}
	}
	\subcaptionbox{\(E_{\matK,1}\) vs. \(h\), \(k=3\)}
	{
		\begin{tikzpicture}[scale=0.56]
		\begin{loglogaxis}
		\addplot table[x=MeshSize,y=H1Error] {data/diffusion_gradmin_k3.dat};
		\addplot table[x=MeshSize,y=H1Error] {data/diffusion_gradmax_k3.dat};
		\addplot table[x=MeshSize,y=H1Error] {data/diffusion_gradktf_k3.dat};
		\addplot table[x=MeshSize,y=H1Error] {data/diffusion_ori_k3.dat};
		\logLogSlopeTriangle{0.90}{0.4}{0.1}{4}{black};
		\end{loglogaxis}
		\end{tikzpicture}
	}
	\subcaptionbox{\(E_{J,h}\) vs. \(h\), \(k=3\)}
	{
		\begin{tikzpicture}[scale=0.56]
		\begin{loglogaxis}
		\addplot table[x=MeshSize,y=JumpError] {data/diffusion_gradmin_k3.dat};
		\addplot table[x=MeshSize,y=JumpError] {data/diffusion_gradmax_k3.dat};
		\addplot table[x=MeshSize,y=JumpError] {data/diffusion_gradktf_k3.dat};
		\addplot table[x=MeshSize,y=JumpError] {data/diffusion_ori_k3.dat};
		\logLogSlopeTriangle{0.90}{0.4}{0.1}{4}{black};
		\end{loglogaxis}
		\end{tikzpicture}
	}
	\caption{Test B.}
	\label{fig:testB}
\end{figure}
It is clear that the minimally scaled gradient-based stabilisation \(\sKTgradmin\) is a poor choice of stabilisation term, even with a relatively mild diffusion anisotropy of \(10^4\). The induced errors can be several orders of magnitude greater than those induced by other choices of stabilisation term. Moreover, the solving time when using this stabilisation term can be several hundred times greater than when stabilising the scheme with \(\sKTgradmax\), and the solver fails to invert the system matrix for the fifth mesh when \(k=1\) and for the final two meshes when \(k=3\). 

The theory suggests that the \(\HONE\) and discrete energy errors, denoted by \(E_{\aKh}\) and \(E_{\matK,1}\) respectively, should scale with the square-root of the anisotropy ratio \(\alpha^\frac12 = 10^2\). As \(\alphaT\) is constant, the jump error \(E_{J,h}\) is theorised to scale like \(\alpha=10^4\). This is not observed when comparing the convergence results for \(k=1\) and \(k=3\) in Figure \ref{fig:testB} with the results from Test A (Figure \ref{fig:testA}). When, for example, we make the stabilisation choice \(\sKTbdry\), the energy and \(\HONE\) errors are not notably different for anisotropic diffusion, and the jump error sees a slight improvement.

\subsubsection{Test C.}
We consider here a mesh sequence \(\Mh\) such that \(h\) remains mostly constant, but \(\gamma_h\to \infty\). The purpose of this test is to confirm that the multiplicative constants in the error estimates of Section \ref{sec:error.estimates} are indeed not impacted by the presence of many small faces in each element. Two members of this mesh family are illustrated in Figure \ref{fig:meshC} and the parameters of the mesh sequence are given in Table \ref{table:meshC}. 
\begin{figure}[H]
	\centering
	\includegraphics[width=4\textwidth/9]{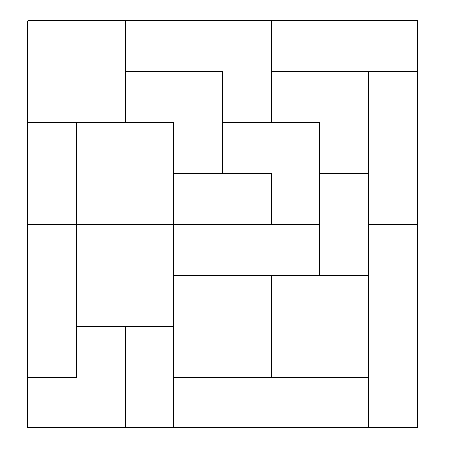} 
	\includegraphics[width=4\textwidth/9]{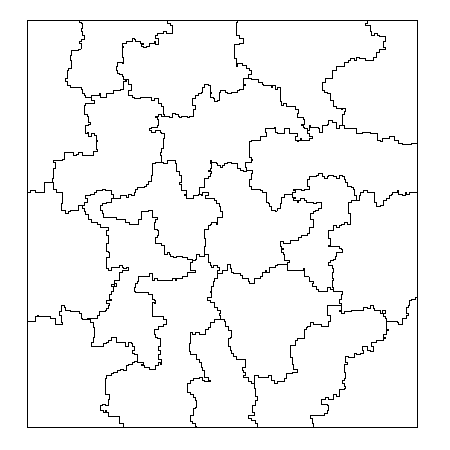} 
	\caption{Two members of the mesh sequence used in Test C.}
	\label{fig:meshC}
\end{figure}
\begin{table}[H]
	\centering
	\pgfplotstableread{data/mesh2_htk0.dat}\loadedtable
	\pgfplotstabletypeset
	[
	columns={MeshSize,hThF,NbCells,NbInternalEdges,FacesPerCell}, 
	columns/MeshSize/.style={column name=\(h\)},
	columns/hThF/.style={column name=\(\gamma_h\)},
	columns/NbCells/.style={column name=No. Elements},
	columns/NbInternalEdges/.style={column name=No. Internal Faces},
	columns/FacesPerCell/.style={column name=Avg. \(\textrm{Card}(\FT)\)},
	every head row/.style={before row=\toprule,after row=\midrule},
	every last row/.style={after row=\bottomrule}
	]\loadedtable
	\caption{Parameters of mesh sequence used in Test C.}
	\label{table:meshC}
\end{table}
We consider the stabilisation term \(\sKTbdry\) and identity diffusion \(\matK=\mat{I}\). We also consider the alternate choice of scaling in \(\sKTbdry\) replacing \(\hT\to \hF\) as is considered in \cite{di-pietro.droniou:2020:hybrid}. When scaling the stabilisation term with \(\hF\), we also make a change in the definition of \(E_{J,h}\), replacing \(\hT\to \hF\) to obtain a meaningful measurement. The results in Figure \ref{fig:testC} show that when scaling the stabilisation with \(\hT\) the error remains approximately constant as \(\gamma_h\to \infty\), as expected. The small fluctuations in error can be explained by the small changes in \(h\).
\begin{figure}[H]\centering
	\ref{legendc}
	\vspace{0.5cm}\\
	\subcaptionbox{\(E_{\aKh}\) vs. \(\gamma_h\)}
	{
		\begin{tikzpicture}[scale=0.57]
		\begin{loglogaxis}[ legend columns=2, legend to name=legendc ]
		\addplot table[x=hThF,y=EnergyError] {data/mesh2_htk0.dat};
		\addplot table[x=hThF,y=EnergyError] {data/mesh2_htk1.dat};
		\addplot table[x=hThF,y=EnergyError] {data/testC_hf_k0.dat};
		\addplot table[x=hThF,y=EnergyError] {data/testC_hf_k1.dat};
		\legend{(\(k=0\); scaling \(\hT\)),(\(k=1\); scaling \(\hT\)),(\(k=0\); scaling \(\hF\)),(\(k=1\); scaling \(\hF\))};
		\end{loglogaxis}
		\end{tikzpicture}
	}
	\subcaptionbox{\(E_{\matK,1}\) vs. \(\gamma_h\)}
	{
		\begin{tikzpicture}[scale=0.57]
		\begin{loglogaxis}[]
		\addplot table[x=hThF,y=H1Error] {data/mesh2_htk0.dat};
		\addplot table[x=hThF,y=H1Error] {data/mesh2_htk1.dat};
		\addplot table[x=hThF,y=H1Error] {data/testC_hf_k0.dat};
		\addplot table[x=hThF,y=H1Error] {data/testC_hf_k1.dat};
		\end{loglogaxis}
		\end{tikzpicture}
	}
	\subcaptionbox{\(E_{J,h}\) vs. \(\gamma_h\)}
	{
		\begin{tikzpicture}[scale=0.57]
		\begin{loglogaxis}[]
		\addplot table[x=hThF,y=JumpError] {data/mesh2_htk0.dat};
		\addplot table[x=hThF,y=JumpError] {data/mesh2_htk1.dat};
		\addplot table[x=hThF,y=JumpError] {data/testC_hf_k0.dat};
		\addplot table[x=hThF,y=JumpError] {data/testC_hf_k1.dat};
		\end{loglogaxis}
		\end{tikzpicture}
	}
	\caption{Test C.}
	\label{fig:testC}
\end{figure}
Regarding the scaling by \(\hF^{-1}\), even though it seems to display to some extent a robustness with respect to small faces, we notice that, as \(\gamma_h\) gets very large, the error appears to worsen. Moreover, the time taken to solve the system can be up to three times as long, which suggests that the system matrix may be ill conditioned by this choice of scaling. This seems to indicate that the \(\hT^{-1}\) scaling we introduced in \(\sKTbdry\) is essential to obtain an HHO scheme that is robust (both in terms of accuracy, and numerical stability) with respect to small faces.

\subsection{Tests in Three Dimensions}
In this section we conduct some 3D numerical tests in the unit box \(\Omega=(0,1)^3\). We consider the equivalent exact solution to that for the 2D tests,
\[
u(x,y,z) = \sin(\pi x)\sin(\pi y)\sin(\pi z). \nn
\]
This test is analogous to that of Test A, with identity diffusion tensor \(\matK=\mat{I}\) and \(\gamma_h\to\infty\) as \(h\to 0\). The tests are conducted for face- and element-polynomial degrees \(k=l=1\) (except when using \(\sKT^{(k-1)}\), in which case \(l=k-1=0\)). The mesh data is given in Table \ref{table:3Dmesh} and the convergence results are plotted in Figure \ref{fig:3Dtest}.
\begin{table}[H]
	\centering
	\pgfplotstableread{data/testDparams.dat}\loadedtable
	\pgfplotstabletypeset
	[
	columns={MeshSize,hThF,NbCells,NbInternalFaces, FacesPerCell}, 
	columns/MeshSize/.style={column name=\(h\)},
	columns/hThF/.style={column name=\(\gamma_h\)},
	columns/NbCells/.style={column name=No. Elements},
	columns/NbInternalFaces/.style={column name=No. Internal Faces},
	columns/FacesPerCell/.style={column name=Avg. \(\textrm{Card}(\FT)\)},
	every head row/.style={before row=\toprule,after row=\midrule},
	every last row/.style={after row=\bottomrule}
	]\loadedtable
	\caption{Parameters of mesh sequence used in Test D.}
	\label{table:3Dmesh}
\end{table}

\begin{figure}[H]
	\centering
	\ref{3dlegend2}
	\vspace{0.5cm}\\
	\subcaptionbox{\(E_{\aKh}\) vs \(h\), \(k=1\)}
	{
		\begin{tikzpicture}[scale=0.57]
		\begin{loglogaxis}[ legend columns=3, legend to name=3dlegend2 ]
		\addplot table[x=MeshSize,y=EnergyError] {data/3d_ori_ht_k1.dat};
		\addlegendentry{\(\sKTbdry\)};
		\addplot table[x=MeshSize,y=EnergyError] {data/3d_grad_ht_k1.dat};
		\addlegendentry{\(\sKTgrad\)};
		\addplot table[x=MeshSize,y=EnergyError] {data/3d_bdry_ht_k1.dat};
		\addlegendentry{\(\sKT^{(k-1)}\)};
		\logLogSlopeTriangle{0.90}{0.4}{0.1}{2}{black};
		\end{loglogaxis}
		\end{tikzpicture}
	}
	\subcaptionbox{\(E_{\matK,1}\) vs \(h\), \(k=1\)}
	{
		\begin{tikzpicture}[scale=0.57]
		\begin{loglogaxis}
		\addplot table[x=MeshSize,y=H1Error] {data/3d_ori_ht_k1.dat};
		\addplot table[x=MeshSize,y=H1Error] {data/3d_grad_ht_k1.dat};
		\addplot table[x=MeshSize,y=H1Error] {data/3d_bdry_ht_k1.dat};
		\logLogSlopeTriangle{0.90}{0.4}{0.1}{2}{black};
		\end{loglogaxis}
		\end{tikzpicture}
	}
	\subcaptionbox{\(E_{J, h}\) vs \(h\), \(k=1\)}
	{
		\begin{tikzpicture}[scale=0.57]
		\begin{loglogaxis}
		\addplot table[x=MeshSize,y=JumpError] {data/3d_ori_ht_k1.dat};
		\addplot table[x=MeshSize,y=JumpError] {data/3d_grad_ht_k1.dat};
		\addplot table[x=MeshSize,y=JumpError] {data/3d_bdry_ht_k1.dat};
		\logLogSlopeTriangle{0.90}{0.4}{0.1}{2}{black};
		\end{loglogaxis}
		\end{tikzpicture}
	}
	\caption{3D Test}
	\label{fig:3Dtest}
\end{figure}

The plots in Figure \ref{fig:3Dtest} show the error to be slightly sub-optimal for the first two meshes and otherwise converge as predicted by theory. The poor convergence rates for the first two meshes is explainable by the agglomeration process producing elements which are `less round', thus increasing the mesh regularity parameter \(\varrho^{-1}\). The results in 3 dimensions match those seen in 2 dimensions in Test A.

\begin{remark}[Increase in computation time]
	The numerical results in this section have confirmed that the error estimates are robust with respect to the size of and number of faces contained in each element. However, the number of globally coupled degrees of freedom of the system increases linearly with the total number of internal faces. Thus, the computational cost of an HHO scheme on coarse meshes is potentially far greater than on similarly scaled meshes possessing fewer faces per element. However, the process of coarsening meshes via agglomeration is still a viable option as the final computational cost is still far less than on  fine meshes.
\end{remark}

Despite testing Hybrid High-Order schemes on some highly irregular meshes, with mesh elements consisting of as many as 1000 faces, the convergence results in this section supported the theory developed in this paper. As mentioned in the introduction, generating meshes by agglomeration is a common technique for capturing complex geometries. As such, the convergence of HHO schemes on such meshes is an important addition to the literature. The numerical results in Test B, and to a lesser extent in Test C, show that the stabilisation term can have a large effect on the induced error of the scheme, and the conditioning of the system matrix. Investigating the optimal choice of stabilisation term and scaling could be an important topic for further research.

\section*{Acknowledgements}

This work was partially supported by the Australian Government through the \emph{Australian Research Council}'s Discovery Projects funding scheme (grant number DP170100605).


\bibliographystyle{plain}
\bibliography{references}


\end{document}